\newtheorem{thm}{Theorem}[section]
\newtheorem{prop}[thm]{Proposition}
\newtheorem{lemme}[thm]{Lemma}
\newtheorem{cor}[thm]{Corollary}
\newtheorem{defipro}[thm]{Definition-Proposition}
\theoremstyle{definition}
\newtheorem{defi}[thm]{Definition}
\theoremstyle{remark}
\newtheorem{nota}[thm]{Notation}
\newtheorem{rmk}[thm]{Remark}
\newtheorem{ex}[thm]{Example}
\newtheorem{question}[thm]{Question}
\DeclareMathOperator{\Z}{\mathbb{Z}}
\DeclareMathOperator{\N}{\mathbb{N}}
\DeclareMathOperator{\codim}{Codim}
\DeclareMathOperator{\rk}{rk}
\DeclareMathOperator{\Fix}{Fix}
\DeclareMathOperator{\Supp}{Supp}
\DeclareMathOperator{\id}{id}
\DeclareMathOperator{\Sym}{Sym}
\DeclareMathOperator{\diag}{diag}
\DeclareMathOperator{\Sing}{Sing}
\DeclareMathOperator{\GL}{GL}
\DeclareMathOperator{\Q}{\mathbb{Q}}
\DeclareMathOperator{\C}{\mathbb{C}}
\DeclareMathOperator{\tr}{tr}
\DeclareMathOperator{\ch}{ch}
\DeclareMathOperator{\td}{td}
\DeclareMathOperator{\vol}{vol}
\DeclareMathOperator{\orb}{orb}
\DeclareMathOperator{\reg}{reg}
\DeclareMathOperator{\SU}{SU}
\DeclareMathOperator{\Sp}{Sp}
\DeclareMathOperator{\U}{U}
\DeclareMathOperator{\SL}{SL}
\newcommand{\eq}[1][r]
{\ar@<-3pt>@{-}[#1]
\ar@<-1pt>@{}[#1]|<{}="gauche"
\ar@<+0pt>@{}[#1]|-{}="milieu"
\ar@<+1pt>@{}[#1]|>{}="droite"
\ar@/^2pt/@{-}"gauche";"milieu"
\ar@/_2pt/@{-}"milieu";"droite"}
\newcommand{\incl}[1][r]
  {\ar@<-0.2pc>@{^(-}[#1] \ar@<+0.2pc>@{-}[#1]}
\begin{document}
\title[Betti numbers of 4-dimensional symplectic orbifolds]{On the Betti numbers of compact holomorphic symplectic orbifolds of dimension four}

\author{Lie \textsc{Fu}}

\author{Gr\'egoire \textsc{Menet}} 
	
	\thanks{\textit{2020 Mathematics Subject Classification.} 57R18, 14J42, 14J28, 53C26, 19L10, 14B05}
	\thanks{{\em Key words and phrases.} hyper-K\"ahler varieties, orbifolds, Betti numbers, singularities, Riemann--Roch theorem.
	}
	\thanks{L. Fu is supported by the Radboud Excellence Initiative programme. G. Menet was financed by the Fapesp grant 2014/05733-9, the Marco Brunella grant of Burgundy University and the ERC-ALKAGE, grant No. 670846. }

	\date{\today}

\begin{abstract}
We extend a result of Guan by showing that the second Betti number of a 4-dimensional primitively symplectic orbifold is at most 23 and there are at most 91 singular points. The maximal possibility 23 can only occur in the smooth case. In addition to the known smooth examples with second Betti numbers 7 and 23, we provide examples of such orbifolds with second Betti numbers 3, 5, 6, 8, 9, 10, 11, 14 and 16. In an appendix, we extend Salamon's relation among Betti/Hodge numbers of symplectic manifolds to symplectic orbifolds.  
\end{abstract}

\maketitle

\section{Introduction}
A compact K\"ahler manifold is called \emph{holomorphic symplectic} if it admits a holomorphic 2-form that is nowhere degenerate. In particular, it is even-dimensional and has trivial canonical bundle. Such a manifold is called \emph{irreducible} if moreover it is simply connected and the holomorphic symplectic form is unique up to scalar. Irreducible holomorphic symplectic (IHS) manifolds (also known as compact \emph{hyper-K\"ahler} manifolds) admit a Ricci-flat Riemannian metric \cite{Yau}, and are characterized by the condition that the holonomy group is the compact symplectic group. The importance of IHS manifolds is manifested in the Beauville--Bogomolov decomposition theorem \cite{Beauville, Bogomolov}: any compact K\"ahler manifold with vanishing first Chern class has a finite \'etale cover which can be written as a product of a complex torus, Calabi--Yau varieties and IHS manifolds. We refer to \cite{Beauville}, \cite{HuyInv} and \cite[Part III]{GHJ03} for the basic theory on such manifolds.

Irreducible holomorphic symplectic surfaces are nothing but K3 surfaces. The construction problem for IHS manifolds in higher dimensions seems quite hard: up to deformation, in each even dimension ($\geq 4$), we so far only have two examples constructed by Beauville \cite{Beauville} (Hilbert schemes of points on K3 surfaces and generalized Kummer varieties) together with two sporadic examples constructed by O'Grady \cite{O'G99, O'G03} in dimensions 6 and 10. The limitedness of available examples suggests the possibility to bound or even classify IHS manifolds (see \cite{Huy03} for diffeomorphic types). As the second cohomology of an IHS manifold, together with the Beauville--Bogomolov quadratic form \cite{Beauville} and the weight-2 Hodge structure, controls most of its geometry \cite{VerbitskyTorelli, Mar11, Huy12, Bakker, Lol, BakkerLehn, VerbitskyTorelliErratum},  it is natural to ask the following question.

\begin{question}\label{ques:b2}
In a given dimension, what values can the second Betti number of an irreducible holomorphic symplectic manifold take? 
\end{question}

In dimension 4, Guan \cite{Guan} proved the following result in the direction of Question \ref{ques:b2}.
\begin{thm}[Guan]\label{thm:Guan}
The second Betti number of a 4-dimensional irreducible holomorphic symplectic manifold is no more than 8, or equal to 23. Moreover, if $b_{2}=23$, the Hodge diamond must be the same as that of the Hilbert square of a K3 surface. 
\end{thm}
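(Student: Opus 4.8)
The plan is to turn the statement into a finite numerical problem about the Hodge numbers of $X$, governed by three inputs: the K3-type Hodge structure on $H^2$, the hidden $\mathfrak{so}(b_2+2)$-symmetry of the total cohomology, and the index theorem. First I would fix the Hodge-theoretic normal form. Since $X$ is irreducible holomorphic symplectic of complex dimension $4$, we have $h^{p,0}=1$ for $p$ even, $h^{p,0}=0$ for $p$ odd, and $b_1=0$; the weight-two Hodge structure is of K3 type, so $b_2=2+h^{1,1}$ and bounding $b_2$ amounts to bounding $h^{1,1}$. Writing $a:=b_2$, the remaining Hodge numbers $h^{2,1},h^{3,1},h^{2,2}$ are then recovered from $b_3$ and $b_4$.

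The soft upper bound comes from combining two facts. Verbitsky's theorem on the subalgebra generated by $H^2$ yields an injection $\Sym^2 H^2(X)\hookrightarrow H^4(X)$, hence $b_4\ge\binom{a+1}{2}$; Salamon's relation, valid for every compact hyperk\"ahler manifold and extended to the orbifold setting in the appendix, reads in complex dimension four as $b_4=46+10b_2-b_3$. Since $b_3\ge 0$, these give $\binom{a+1}{2}\le 46+10a$, i.e.\ $a^2-19a-92\le 0$, whose largest root is $23$; thus $b_2\le 23$. At $a=23$ both steps are forced to be equalities: $b_3=0$ and $\Sym^2H^2\xrightarrow{\ \sim\ }H^4$. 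Transporting the Hodge decomposition of $\Sym^2 H^2$ — computed from the type $(1,21,1)$ structure on $H^2$ — across this isomorphism gives $h^{3,1}=21$ and $h^{2,2}=232$, which is precisely the Hodge diamond of the Hilbert square of a K3 surface.

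The main obstacle is to exclude $9\le b_2\le 22$, since the two inequalities above are not sharp there: they only force the number of ``extra'' classes $t:=b_4-\binom{a+1}{2}$ to be a non-negative integer, and $t>0$ is numerically permitted throughout this range. To cut it down I would analyse $H^*(X,\mathbb{Q})$ as a module over the Looijenga--Lunts--Verbitsky algebra $\mathfrak{so}(b_2+2)$. The subrepresentation generated by $1\in H^0$ is exactly the image of $\Sym^\bullet H^2$; any further constituent must be either a nontrivial even representation — which, having a weight of absolute value at least two, would contribute to $H^0\oplus H^2$ and is therefore excluded, those groups being already exhausted by the generating representation — or a spinorial representation contributing only to the odd cohomology, or a trivial representation contributing a $(2,2)$-class to $H^4$. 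The spinorial part forces $b_3$ to be at least the $(\pm1)$-weight multiplicity of a half-spin representation of $\mathfrak{so}(b_2+2)$, a quantity growing exponentially in $b_2$ and hence incompatible with the linear relation of Salamon once $b_2$ is moderately large; this confines nonzero $b_3$ to small $b_2$.

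For the surviving case $b_3=0$ I would bring in the quantitative index-theoretic identities: Riemann--Roch for $\mathcal{O}_X$, which in dimension four gives $3\,c_2^2[X]-c_4[X]=2160$, together with the Hirzebruch signature formula expressing $\sigma$ through $c_2^2$ and $c_4$, and hence through the Hodge numbers. I expect reconciling the trivial-representation multiplicity $t$ with these Chern-number relations and with the positivity of the intersection form on the middle cohomology $H^4$ to be the genuinely delicate point, and the place where the intermediate values are eliminated. Indeed the index identities \emph{alone} remain consistent across the whole range $9\le b_2\le 22$ (they merely re-express $h^{3,1}$ and $\sigma$ in terms of $a$ and $t$), so the elimination cannot be purely soft: it must extract a \emph{strict} inequality from the Hodge--Riemann bilinear relations on $H^4$, or a finer integrality/realisability constraint on the $t$ exotic $(2,2)$-classes, forcing $t$ to its extreme value $0$ (whence $b_2=23$) unless $b_2$ is already small. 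Making this last constraint effective is, I anticipate, where the real difficulty of the theorem resides.
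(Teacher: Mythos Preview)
Your first two steps are correct and essentially match what the paper does: the bound $b_2\le 23$ is obtained by combining Verbitsky's injection $\Sym^2 H^2\hookrightarrow H^4$ with Salamon's relation $b_4+b_3-10b_2=46$, and at $b_2=23$ both become equalities, pinning down the Hodge diamond. (The paper states Guan's theorem without proof and recovers only the $b_2\le 23$ part in the course of proving its orbifold generalization, Theorem~\ref{main}.)

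The gap is in your strategy for eliminating $9\le b_2\le 22$. You propose to use the LLV $\mathfrak{so}(b_2+2)$-module structure to force $b_3=0$ for large $b_2$, and then to squeeze out the remaining cases from Riemann--Roch plus Hodge--Riemann on $H^4$; you yourself flag this last step as speculative, and it is. Guan's actual mechanism is different and decisive. He uses the Hitchin--Sawon identity (generalized in Section~\ref{sec:HSformula} of this paper), which in dimension four reads
\[
N(c_2)^2 \;=\; 192\,\td_4 - \tfrac{1}{3}c_2^2,
\]
together with a Hodge--Riemann-type inequality on $H^4$ (Lemma~\ref{Guanorbi} here, Guan's Lemma~3) of the shape
\[
3b_2\,N(c_2)^2 \;\le\; (b_2+2)\,c_2^2.
\]
Replacing $c_2^2$ via $3c_2^2=720\,\td_4+c_4$ and expressing $\td_4,c_4$ through Betti numbers by Riemann--Roch and Gauss--Bonnet (equations \eqref{0}--\eqref{caraequa} with $S_i=0$), one obtains the clean quadratic inequality
\[
(b_2+1)\,b_3 \;\le\; 4\,b_2^2 - 124\,b_2 + 736 \;=\; 4\,(b_2-8)(b_2-23).
\]
For $9\le b_2\le 22$ the right-hand side is negative while the left is nonnegative, which rules out the entire range in one stroke. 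Your LLV route might eventually constrain $b_3$, but it does not supply this inequality; the missing ingredient is precisely the Hitchin--Sawon relation linking the Fujiki constant $N(c_2)$ to $\hat A$-genus data, which is what turns the soft Hodge--Riemann positivity into a sharp numerical obstruction.
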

The fact that $b_{2}\leq 23$ was attributed to Beauville. See \cite{Guan} and \cite{Guan2} for extra constraints on each cases; see \cite{Kurnosov, Sawon15} for related results in dimension 6 and the more recent work \cite{KimLaza} for a conjectural bound in arbitrary dimension based on \cite{GKLR19}. When $b_{2}=23$, let us mention the work \cite{O'G08, Kapu16}, which aims at determining the deformation type of IHS fourfolds upon fixing some extra topological data.

\vspace{10pt}
In the point of view of birational geometry, or more precisely the minimal model program (\emph{cf}.~\cite{KollarMori}), it is important to treat varieties with mild singularities. With recent intensive efforts \cite{GKP16, Dru18, GGK, DG18, HP19}, the Beauville--Bogomolov decomposition theorem is now extended to projective varieties with klt singularities and numerically trivial canonical class. Naturally, boundedness results for possibly singular irreducible holomorphic symplectic varieties (\cite[Definition 8.16]{GKP16}) are desired. In particular, Question \ref{ques:b2} can be posed in this broader setting. 

This article is our first experimental attempt towards the boundedness problem,  where we will focus on the classical approach of enlarging the category of IHS manifolds to the so-called \emph{primitively symplectic orbifolds}, pioneered by Fujiki \cite{Fujiki}. Roughly speaking, a primitively symplectic orbifold is a compact K\"ahler space with quotient singularities in codimension $\geq 4$, such that the smooth locus carries a holomorphic symplectic form which is unique up to scalar. See Definition~\ref{def:PSO}. Our first main result extends Guan's Theorem \ref{thm:Guan}. 
\begin{thm}\label{main}
Let $X$ be a primitively symplectic orbifold of dimension 4. Then
\begin{equation*}
b_2(X)\leq 23.
\end{equation*}
Moreover, the equality occurs only in the smooth case.
\end{thm}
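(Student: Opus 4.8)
The plan is to combine three ingredients: the Fujiki--Verbitsky injectivity on $H^2$, the orbifold form of Salamon's relation established in the appendix, and the orbifold (Kawasaki) Riemann--Roch theorem to control the singular points. First I would record that the Fujiki--Verbitsky theorem extends to a primitively symplectic orbifold $X$ of dimension $4$: cup product induces an injection $\Sym^2 H^2(X,\C)\hookrightarrow H^4(X,\C)$, so that comparing dimensions gives the quadratic lower bound
\[
b_4(X)\ \geq\ \dim_{\C}\Sym^2 H^2(X,\C)\ =\ \frac{b_2(X)\bigl(b_2(X)+1\bigr)}{2}.
\]
On the other hand, the orbifold Salamon relation (Appendix) reads, in real dimension $8$,
\[
b_4(X)\ =\ 46+10\,b_2(X)-b_3(X)\ \leq\ 46+10\,b_2(X),
\]
the last inequality because $b_3(X)\geq 0$. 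Writing $b:=b_2(X)$ and combining the two displays yields $\tfrac12 b(b+1)\leq 46+10b$, i.e. $b^2-19b-92\leq 0$, that is $(b-23)(b+4)\leq 0$. Since $b\geq 0$ this gives $b_2(X)\leq 23$, which is the first assertion.

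For the equality case I would argue that $b_2(X)=23$ saturates every inequality above: necessarily $b_3(X)=0$, $b_4(X)=276$, and the injection $\Sym^2 H^2(X,\C)\hookrightarrow H^4(X,\C)$ is an isomorphism, so the whole Hodge diamond is forced to coincide with that of the Hilbert square of a K3 surface; in particular the topological Euler number and signature are determined. It then remains to exclude genuine singularities. Here I would apply the orbifold Riemann--Roch theorem to $\mathcal{O}_X$, writing
\[
3=\chi(X,\mathcal{O}_X)=\int_X \td(TX)+\sum_{x\in \Sing(X)}\mathrm{R}(x),
\]
where $\int_X\td(TX)$ is the stacky contribution and $\mathrm{R}(x)$ is the local term attached to the isolated quotient singularity $\C^4/G_x$. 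Since $G_x\subset \Sp$ acts freely on $\C^4\setminus\{0\}$, the eigenvalues of each $g\neq\id$ are roots of unity $\neq 1$ occurring in pairs $\lambda,\lambda^{-1}=\bar\lambda$, whence $(1-\lambda)(1-\lambda^{-1})=|1-\lambda|^2>0$ and $\det(\id-g)>0$; this makes each local term of a definite sign. Comparing the stacky contribution, re-expressed through the now-determined topological invariants and the singular defects, with the value $3$ should force every local term to vanish, hence $\Sing(X)=\emptyset$ and $X$ smooth (consistent with Guan's Theorem~\ref{thm:Guan}).

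The main obstacle is precisely this last step: setting up the orbifold Riemann--Roch formula with explicitly computable local contributions at symplectic quotient singularities, controlling their signs via the pairing $\lambda\leftrightarrow\lambda^{-1}$, and showing that their strict positivity is incompatible with the saturated value $b_2(X)=23$. Establishing the orbifold Salamon relation used above (the content of the appendix) and checking that Fujiki--Verbitsky injectivity survives in the orbifold setting are the two remaining points requiring care; once they are in place, the numerical argument closes quickly, and the same Riemann--Roch bookkeeping is what should also yield the effective bound on the number of singular points.
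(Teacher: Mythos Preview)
Your overall strategy---Verbitsky injectivity plus Salamon---is exactly the paper's, but there is a genuine gap in how you invoke the Salamon relation. The orbifold version (Proposition~\ref{RR}, or the appendix you cite) does \emph{not} read $b_4=46+10b_2-b_3$; it reads
\[
b_4+b_3-10b_2\;=\;46+s,
\]
where $s=\sum_{x\in\Sing X}s_x$ is a correction term coming from the singular points. Your displayed equality is simply false when $X$ is singular. What makes the argument work is the estimate $s_x\le -1$ for every singular point, obtained by writing each $g\in G_x\setminus\{\id\}$ in symplectic diagonal form and checking that
\[
\frac{\tr(\rho_{x,\Omega_X^{[1]}}(g))-4}{\det(\id-g)}\;\le\;-\tfrac12
\]
(Section~\ref{esti}). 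This gives $s\le 0$, hence $b_4\le 46+10b_2+s\le 46+10b_2$, and then your quadratic inequality $(b_2-23)(b_2+4)\le 0$ is correct. So the missing ingredient is precisely this sign estimate on the local terms $s_x$; without it the inequality $b_4\le 46+10b_2$ is unjustified.

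Once the $s$-term is in place, the equality case is immediate and your separate Riemann--Roch argument is both unnecessary and, as written, circular. If $b_2=23$ then the chain $\tfrac12 b_2(b_2+1)\le b_4\le 46+10b_2+s$ forces $s=0$ (and $b_3=0$); since each $s_x\le -1$, this means $\Sing X=\emptyset$. Your proposed route via $\chi(\mathcal{O}_X)=\int_X\td(TX)+\sum_x \mathrm{R}(x)$ does not close: $\int_X\td(TX)$ is not determined by the Betti numbers alone---the equations relating it to $b_2,b_3,b_4$ (see \eqref{0}--\eqref{caraequa}) themselves involve the singular corrections $S_0,S_1,\eta$---so ``re-expressing the stacky contribution through the now-determined topological invariants'' already presupposes knowledge of the singular defects you are trying to bound.
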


Our second main result bounds the size of the singularities.
\begin{thm}\label{main2}
Let $X$ be a primitively symplectic orbifold of dimension 4. Then
\begin{enumerate}[$(i)$]
\item $X$ has at most 91 singular points.
\item For each singular point of $X$, the order of the local fundamental group is at most $1424$.
\end{enumerate}
\end{thm}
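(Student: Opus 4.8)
The plan is to reduce to a local-to-global computation via the orbifold Riemann--Roch theorem of Kawasaki and to exploit the positivity of the local contributions at symplectic singularities. First I would record the structure of the singularities: since $\dim X=4$ and the quotient singularities occur in codimension $\ge 4$, the locus $\Sing(X)$ is a finite set of points, and near each $p\in\Sing(X)$ the space $X$ is analytically isomorphic to $\C^4/G_p$, where $G_p$ is the local fundamental group. Because the symplectic form extends over the smooth locus and is nowhere degenerate, $G_p$ is a finite subgroup of $\Sp(4,\C)$ acting freely on $\C^4\setminus\{0\}$; in particular no nontrivial $g\in G_p$ has $1$ as an eigenvalue. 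Applying Kawasaki's orbifold Hirzebruch--Riemann--Roch to the structure sheaf, and using that a primitively symplectic fourfold has $\chi(X,\mathcal O_X)=h^{0,0}-h^{0,1}+h^{0,2}-h^{0,3}+h^{0,4}=3$, I obtain
\begin{equation*}
3=\int_X\td(X)+\sum_{p\in\Sing(X)}a(G_p),\qquad a(G_p)=\frac{1}{|G_p|}\sum_{g\in G_p\setminus\{1\}}\frac{1}{\det(1-g)},
\end{equation*}
where $\int_X\td(X)$ denotes the orbifold Todd number, expressed through the orbifold Chern numbers $\hat c_2^2,\hat c_4$.

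The key elementary observation is that every local term is positive and bounded below. A nontrivial $g\in G_p$ has eigenvalues $\lambda,\bar\lambda,\mu,\bar\mu$ on the unit circle with $\lambda,\mu\ne1$, so
\begin{equation*}
\det(1-g)=|1-\lambda|^2\,|1-\mu|^2\in(0,16],
\end{equation*}
the maximum $16$ being attained only by $g=-\id$. Hence $\tfrac{1}{\det(1-g)}\ge\tfrac1{16}$, giving $a(G_p)\ge\frac{|G_p|-1}{16|G_p|}\ge\frac1{32}$, with equality precisely when $G_p\cong\Z/2$ is generated by $-\id$. Consequently $\#\Sing(X)\le 32\sum_p a(G_p)$, and the count follows once $\sum_p a(G_p)$, equivalently the main term $\int_X\td(X)$, is controlled. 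For this I would feed in Theorem \ref{main}: combining the orbifold Riemann--Roch expressions for $\chi(\mathcal O_X)$ and for the signature with the orbifold Salamon relation of the appendix expresses the orbifold Chern numbers, and hence $\int_X\td(X)$, in terms of $b_2(X)$ and $b_3(X)$; the inequality $b_2(X)\le 23$ then bounds $\sum_p a(G_p)$ from above and yields $\#\Sing(X)\le 91$.

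For the second statement I would use the same identity pointwise. Since all contributions are positive, any single point satisfies $a(G_p)\le\sum_q a(G_q)\le 3$ (using $\int_X\td(X)\ge0$). It therefore suffices to show that $a(G)\le 3$ forces $|G|\le 1424$ for a finite $G\subset\Sp(4,\C)$ acting freely on the sphere. The point is that $\frac{1}{\det(1-g)}$ grows rapidly with the order of $g$: if $g$ has eigenvalue $e^{2\pi i/m}$ then $|1-e^{2\pi i/m}|^2\sim(2\pi/m)^2$, so $\frac1{\det(1-g)}$ is of order $m^4$, and a cyclic subgroup $\langle g\rangle$ of order $m$ already contributes a quantity $\gg m^3$ to $|G|\,a(G)$. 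To pass from cyclic subgroups to $G$ itself I would invoke Jordan's theorem to extract an abelian normal subgroup $A\le G$ of bounded index, estimate $a(G)\ge\frac{|A|}{|G|}a(A)$, and bound $a(A)$ below by an explicit multiple of a power of $|A|$ using the Dedekind-sum-type estimates above, thereby bounding first $|A|$ and then $|G|$.

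The main obstacle is the extraction of the sharp explicit constants. For $(i)$, pinning down $91$ rather than the crude $96=32\cdot3$ requires the exact dependence of $\int_X\td(X)$ on $b_2(X)$, i.e. a careful simultaneous analysis of the orbifold Euler number, the signature and the Salamon relation together with the integrality of $\chi(\mathcal O_X)$, ruling out the configurations in which every point is a $\Z/2$-point. For $(ii)$, obtaining the explicit bound $1424$ is the genuinely delicate step: it demands a uniform lower estimate for the Dedekind-sum-type quantity $a(G)$ over all finite subgroups of $\Sp(4,\C)$ acting freely, controlling simultaneously the orders of elements, the rank of the abelian part and Jordan's index, which is where essentially all of the arithmetic work is concentrated.
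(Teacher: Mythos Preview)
Your approach diverges from the paper's in the choice of local invariant, and that choice is where the argument breaks down. You work with $S_0=\sum_p a(G_p)$ coming from $\chi(\mathcal O_X)=3$, and your local bound $a(G_p)\ge\tfrac1{32}$ is correct; but to get a finite count you then need to bound $\int_X\td(X)$ from below, and this is the gap. You invoke $\int_X\td(X)\ge 0$ without justification (there is none: $\int_X\td_4=3-S_0$, and nothing prevents $S_0>3$ a priori), and your fallback---expressing $\int_X\td(X)$ via the Salamon relation and $b_2,b_3$---does not work either, because eliminating $\int_X c_4$ from the Riemann--Roch identities for $\mathcal O_X$, $\Omega^{[1]}_X$ and the Gauss--Bonnet formula leaves $S_1$ and $\eta$ in the expression. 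One cannot isolate $S_0$ from the Betti numbers alone. The paper sidesteps this by choosing a \emph{different} local quantity, $s_x$, built from the specific combination $s=6(S_1-4S_0)+\eta$; this combination is exactly what drops out of the orbifold Salamon relation $b_4+b_3-10b_2=46+s$, so it is governed globally by Betti numbers, and Verbitsky's injection $\Sym^2H^2\hookrightarrow H^4$ (the Guan inequality) gives $s\ge -91$. The point is that for this particular combination one checks $s_x\le -1$ for every symplectic quotient singularity (in fact $s_x\le -2(|G_x|-1)/|G_x|$), yielding $(i)$ on the nose.

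For $(ii)$ the issue is sharper. The paper does not use Jordan's theorem at all; Jordan's index bound in $\GL_4$ is enormous and would never produce $1424$. Instead the paper classifies finite subgroups of $\Sp(2)=\Sp(4,\C)\cap\SU(4)$: every such $G$ contains, with index at most $2$, a subgroup of the form $\{\diag(A,\theta(A)):A\in H\}$ for a single Kleinian group $H\subset\SU(2)$ and an automorphism $\theta$. This reduces the computation of $s_x$ to the ADE list, where one gets explicit formulas (e.g.\ $s_x(A_n)=-(n-1)$, $s_x(\tilde D_n)=-(n+2)$, and $s_x(\tilde D_n^{[2]})\le -(n+4)/2$). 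Feeding in $s_x\ge -91$ then pins down the extremal group as an index-$2$ extension of $\tilde D_{178}$, of order $8\cdot 178=1424$. Your Dedekind-sum growth heuristic is qualitatively right for a fixed cyclic subgroup, but without the structural input that $G$ is essentially governed by a subgroup of $\SU(2)$ there is no route from $a(G)\le 3$ (itself unjustified) to the constant $1424$.
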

The proof of Theorem \ref{main} and Theorem \ref{main2} is given in the end of Section \ref{sec:Proof}.\\

The bound for $b_{2}$ in Theorem \ref{main} being the same as in the smooth case 
(note however that no numbers between 9 and 22 are excluded as in \cite{Guan}, 
despite of our effort in Section \ref{sec:HSformula} where we generalize the Hitchin--Sawon formula), 
the construction methods in the orbifold setting are much richer. 
Indeed, staying in the smooth category of IHS fourfolds, the only available values for $b_{2}$ are 23 and 7, achieved by Hilbert squares of K3 surfaces and generalized Kummer fourfolds respectively; while we are able to construct much more examples within the enlarged category of symplectic orbifolds, filling many ``gaps'' in the possibilities of the Betti number. More precisely, we have the following result.  
\begin{thm}
There are 4-dimensional primitively symplectic orbifolds with second Betti number 3, 5, 6, 7, 8, 9, 10, 11, 14, 16 and 23. 
\end{thm}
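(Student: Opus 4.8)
The statement is purely a matter of exhibiting, for each listed value $b$, one $4$-dimensional primitively symplectic orbifold with $b_2 = b$. The plan is to produce all the examples as finite quotients (and, where useful, birational modifications) of the two smooth models already at our disposal, namely the Hilbert square $S^{[2]}$ of a K3 surface ($b_2 = 23$) and the generalized Kummer fourfold $K_2(A)$ of an abelian surface ($b_2 = 7$). The organizing principle is the following elementary observation. If $M$ is a holomorphic symplectic fourfold and $G$ is a finite group of automorphisms preserving the symplectic form $\sigma$, then $\sigma$ descends to the smooth locus of $M/G$ and is unique there up to scalar (because $H^0(M,\Omega^2_M)^G = H^0(M,\Omega^2_M) = \mathbb{C}\sigma$); moreover, since each $g\in G$ acts symplectically, its fixed locus is symplectic, hence even-dimensional, so if $G$ acts with only isolated fixed points then $M/G$ has isolated quotient singularities, i.e.\ singularities in codimension $4$. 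Under these two conditions $M/G$ is a primitively symplectic orbifold in the sense of Definition~\ref{def:PSO}, and its second Betti number is computed by the invariant cohomology,
\begin{equation*}
b_2(M/G) = \dim_{\mathbb{Q}} H^2(M,\mathbb{Q})^G,
\end{equation*}
using that rational cohomology is insensitive to finite quotients.

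The construction then splits according to the size of the target value. The two extremes $b_2 = 23$ and $b_2 = 7$ are realized by the smooth models $S^{[2]}$ and $K_2(A)$ themselves; note that by Theorem~\ref{main} the value $23$ can \emph{only} be attained in the smooth case, so no quotient is possible there. For the intermediate values $8, 9, 10, 11, 14, 16$ I would work with quotients of $S^{[2]}$: starting from the classification of finite groups of symplectic automorphisms of $S^{[2]}$ and the explicit description of the induced isometries on the rank-$23$ lattice $H^2(S^{[2]},\mathbb{Z})$, one selects, for each target, a group $G$ whose invariant sublattice has exactly the prescribed rank and which acts with isolated fixed points. For the small values $3, 5, 6$ I would instead use quotients of the generalized Kummer model (and of closely related abelian-surface constructions, such as suitable quotients of $A\times A$ or of $A^{[2]}$), where the identification $H^2(K_2(A),\mathbb{Z}) \cong H^2(A,\mathbb{Z})\oplus\mathbb{Z}e$ already forces $b_2 \leq 7$; here a symplectic automorphism of $A$ acting nontrivially on $\wedge^2 H^1(A)$ cuts the invariant part down to $3$, $5$ or $6$ according to its eigenvalues — for instance the order-$4$ automorphism $(z_1,z_2)\mapsto (iz_1,i^{-1}z_2)$ on $A=E_i\times E_i$, whose eigenvalue pattern on $H^2(A)$ leaves a rank-$4$ invariant part and hence $b_2 = 5$ after adding $e$.

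For each candidate the verification has three parts: (i) confirm that the chosen $G$ genuinely acts on $M$ preserving $\sigma$; (ii) confirm that its fixed locus is $0$-dimensional, which I would check either by a direct analysis of the fixed points on the Hilbert scheme / Kummer model or by a holomorphic Lefschetz fixed-point computation constraining the possible fixed components; and (iii) compute $\dim_{\mathbb{Q}} H^2(M,\mathbb{Q})^G$ from the induced lattice isometry. In some cases it may be more convenient to realize a value through a birational modification — a symplectic partial resolution, when one exists — of an orbifold already constructed, trading isolated singularities for new classes in $H^2$.

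The main obstacle is condition (ii) together with (iii): one must thread the action between two opposing requirements. Automorphisms with large invariant cohomology tend to fix positive-dimensional (symplectic) subvarieties and therefore violate the codimension-$\geq 4$ condition, whereas automorphisms with very small invariant cohomology are hard to realize at all with isolated fixed points on these specific fourfolds. The delicate part of the argument is thus, for each of the nine non-smooth target values, to locate an \emph{explicit} group action that is simultaneously symplectic, has isolated fixed locus, and has invariant lattice of the prescribed rank — a piece of lattice-theoretic and fixed-point bookkeeping that I expect to be the technical heart of the proof.
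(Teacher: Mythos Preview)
Your overall architecture---realize each value as a quotient of $S^{[2]}$ or $K_2(A)$ by a symplectic finite group, possibly followed by a partial crepant resolution---is exactly the paper's strategy. However, your assignment of \emph{which} construction produces \emph{which} Betti number is inverted in several places, and in at least two cases your primary plan (pure quotients with isolated fixed locus) cannot work.

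For the small values $b_2=3$ and $5$ the paper does \emph{not} use the Kummer side; it takes quotients of $K3^{[2]}$-type fourfolds by symplectic automorphisms of order $11$ and $7$ respectively, which act with only isolated fixed points and have invariant lattice of rank $3$ and $5$. Your proposed order-$4$ automorphism of $K_2(E_i\times E_i)$ is problematic: its square is $-\id$ on the abelian surface, and the induced involution on $K_2(A)$ is well known to fix a surface, so the cyclic group of order $4$ does not act with isolated fixed locus and the quotient violates the codimension-$\geq 4$ condition. Conversely, for $b_2=8$ you propose an $S^{[2]}$-quotient, but the paper obtains $8$ from a $\operatorname{Kum}_2$-type manifold modulo a symplectic involution \emph{followed by resolution of the fixed surface}; note that $8>7=b_2(K_2(A))$, so this value cannot arise from any pure quotient of the Kummer fourfold, and the resolution is what creates the extra class.

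More generally, your remark that partial resolutions ``may be more convenient'' undersells their role. For $b_2=16$ the symplectic involution on a $K3^{[2]}$-type fourfold has invariant lattice of rank $15$ and fixes a K3 surface in addition to $28$ points; one must resolve that surface to obtain an orbifold with isolated singularities, and the exceptional divisor supplies the missing class to reach $16$. The same mechanism is at work for $b_2=6$ and $8$. For $b_2=9,10,14$ the paper uses Fujiki's construction, which, when $\theta=\id$, can be rewritten as $S^{[2]}/H$ with $H$ acting with isolated fixed points---so there your plan does go through. In summary: the skeleton of your proposal is right, but the ``thread the action'' difficulty you anticipate is resolved in the paper not by finding isolated-fixed-point actions for every value, but by systematically allowing fixed surfaces and then resolving them, and by choosing the source fourfold ($K3^{[2]}$ versus $\operatorname{Kum}_2$) opportunistically rather than by the size of the target $b_2$.
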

We refer to Section \ref{sec:Examples} for details of these examples. 
~\\

\noindent\textbf{Acknowledgements.} 
We would like to thank Roland Bacher, Arnaud Beauville, Daniel Huybrechts and Radu Laza for useful discussions. We also want to thank the referee for his or her careful reading and helpful comments.
We are very grateful to the 
Second Japanese-European Symposium on Symplectic Varieties and Moduli Spaces where our collaboration 
was initiated.

\section{Riemann--Roch theorem for orbifolds}\label{reminders}
\subsection{Orbifolds and V-bundles}
We first fix the notion of orbifolds (``V-manifolds'' in \cite{Satake56, Satake57}).
\begin{defi}[Orbifolds]\label{def:orbifolds}
An \emph{$n$-dimensional complex orbifold} is a connected paracompact Hausdorff complex analytic space $X$ such that for any point $x\in X$, there exists an open neighborhood $U$ of $x$ and a triple $(V,G,\pi)$ with $V$ an open subset of $\C^n$, $G$ a finite \emph{subgroup} of the biholomorphic automorphism group of $V$, and $\pi:V\rightarrow U$ the composition of the quotient map $V\rightarrow V/G$ and an isomorphism $V/G\simeq U$. 
\end{defi}
\begin{rmk}\label{normal}
Note that an orbifold is always normal (see for instance \cite[Th\'eor\`eme 4]{Cartan}). In particular, the singular locus is of codimension at least 2.
\end{rmk}

\begin{defipro}[{\cite[Proposition 6]{Prill}}]
Let $X$ be an $n$-dimensional complex orbifold. Let $x\in \Sing X$. 
Then there exist a finite subgroup $G_x$ of $\GL_{n}(\C)$ and an open neighbourhood $V_x\subset \C^n$ of the origin $0\in \C^n$, stable under the action of $G_x$, with $V_x/G_x$ isomorphic to an open neighbourhood $U_x$ of $x$, and such that
$$\codim\Fix(g)\geq 2\ \text{for all}\ g\in G_x\backslash \{\id\}.$$
Such a group $G_x$ is unique up to conjugation. Let $\pi_x:V_x\rightarrow V_x/G_x\simeq U_x$, we call $(U_x,V_x,G_x,\pi_x)$ a \emph{local uniformizing system} of $x$, and $G_x$ the \emph{local fundamental group} of $X$ at $x$.
\end{defipro}

\begin{rmk}
	In modern literature (see for example \cite{ALRBook}), \emph{orbifold} is a synonym for (analytic) \emph{Deligne--Mumford stack}. In particular, the collection of charts $(V, G, \pi)$ is part of the data of an orbifold and the group $G$ is sometimes allowed to act non-effectively on $V$. If we use the terminology \emph{orbifold} in this generalized sense, then what is defined in Definition \ref{def:orbifolds} corresponds to \emph{effective orbifolds} \cite[Definition 1.2]{ALRBook} with all nontrivial elements of all stabilizer groups acting with a non-empty fixed locus of codimension at least 2. By the above result of Prill, these are equivalent notions, as such an orbifold/stacky structure is determined by the underlying complex analytic space. 
\end{rmk}

The notion of vector bundles naturally generalizes to orbifolds.

\begin{defi}[V-bundles]\label{def:VB}
Let $X$ be an orbifold. 
\begin{itemize}
\item
A V-\emph{bundle} (or \emph{orbibundle}) on $X$ is a vector bundle $F$ on $X_{\operatorname{reg}}:=X\smallsetminus\Sing X$ such that for any local uniformizing system $(U,V,G,\pi)$, there exists a vector bundle $\hat{F}_V$ on $V$ endowed with an equivariant action of $G$ such that: 
$$\hat{F}_{V|V\smallsetminus \Fix G}\simeq \pi^*(F_{|U_{\operatorname{reg}}}),$$
where $\Fix G:=\bigcup_{g\in G, g\neq \id} \Fix(g)$.
\item
Let $\mathscr{F}$ be a coherent sheaf on $X$. 
The sheaf $\mathscr{F}$ is said to be \emph{locally} V-\emph{free} if for any $x\in X$, there exist a local uniformizing system $(U,V,G,\pi)$, a free coherent sheaf $\hat{\mathscr{F}}_V$ on $V$, and a $G$-action on $\hat{\mathscr{F}}_V$ such that $\mathscr{F}_{|U}\simeq \pi_*\left(\hat{\mathscr{F}}_V^G\right)$. By \cite[4.2]{Blache}, the local V-freeness of a coherent sheaf $\mathscr{F}$ is equivalent to the condition that $\mathscr{F}$ is reflexive and the reflexive pull-back $\pi^{[*]}(\mathscr{F}_{|U}):=\pi^{*}(\mathscr{F}_{|U})^{\vee \vee}$ is locally free for any local uniformizing system.
\end{itemize}
As in the smooth case, there is an equivalence of categories between the category of locally V-free sheaves and that of V-bundles.
\end{defi}

\begin{ex}[Reflexive differentials]
Given an orbifold $X$ of dimension $n$, the sheaf of reflexive differential forms (\cite[2.D]{GKKP}, \cite[Section 2.5]{Peters}) $$\Omega_X^{[i]}:=(\Omega_{X}^{i})^{\vee \vee}\cong\iota_{*}(\Omega^{i}_{X_{\operatorname{reg}}})$$  is a locally V-free sheaf for any $i\in \mathbb{N}$, where $\iota:X_{\operatorname{reg}} \to X$ is the natural inclusion of the smooth part. The sheaf of reflexive forms of top degree is identified with the dualizing sheaf: $\omega_{X}\cong \Omega_{X}^{[n]}$.
\end{ex}

\begin{rmk}[Hodge decomposition]
Let $X$ be a compact K\"ahler orbifold.  For any integer $k\geq 0$, the rational singular cohomology group $H^{k}(X, \Q)$ carries a pure Hodge structure of weight $k$ and in the Hodge decomposition 
$$H^{k}(X, \C)=\bigoplus_{p+q=k} H^{p,q}(X),$$
we have $H^{p, q}(X)\cong H^{q}(X, \Omega_{X}^{[p]})$, see \cite[Section 2.5]{Peters}. We denote $h^{p,q}(X):=\dim H^{p, q}(X)$. We have $h^{p,q}=h^{q,p}$.
\end{rmk}


\begin{nota}\label{repre}
Let $X$ be an orbifold, $x\in X$ and $\mathscr{F}$ a locally V-free sheaf. Let $(U,V,G,\pi)$ be a local uniformizing system of $x$ and let $\hat{\mathscr{F}}_V$ be a locally free sheaf on $V$ endowed with an action of $G$ as in Definition \ref{def:VB}. Hence, the fiber of $\hat{\mathscr{F}}_{V}$ at $0$ is endowed with an action of $G$, which provides a representation of $G$. 
We denote by $\rho_{x,\mathscr{F}}$ \emph{the representation of $G$ associated with $x$ and $\mathscr{F}$}.
\end{nota}

\subsection{Characteristic classes on orbifolds}
We recall the definition of Chern classes of V-bundles on orbifolds, by adapting the Chern--Weil approach.
\begin{defi}[{\cite[Definition 2.9]{Blache}}]\label{metric}
 Let $F$ be a V-bundle of rank $r$ on an orbifold $X$.
A \emph{metric} on $F$ is a Hermitian metric $h$ on $F$ as bundle on $X_{\reg}$ such that for all local uniformizing systems $(U,V,G,\pi)$, the Hermitian metric $\pi^*(h_{|U_{\reg}})$ extends to a Hermitian metric on $\hat{F}_V$. 
\end{defi}

\begin{defi}[{\cite[Definition 1.5]{Blache}}]\label{form}
 Let $X$ be an orbifold. A \emph{smooth differential $k$-form} $\varphi$ on $X$ is a $C^{\infty}$ differential $k$-form on
 $X_{\reg}$ such that for all local uniformazing system $(U,V,G,\pi)$, $\pi^*(\varphi_{|U_{\reg}})$ extends to a
 $C^{\infty}$-differential $k$-form on $V$. (We always mean $\C$-valued forms.)
\end{defi}
\begin{nota}
 We denote by $\mathcal{A}^{k}$ the sheaf of differential $k$-forms.
\end{nota}

As explained in \cite[Definition 2.10]{Blache}, we can define the \emph{Chern classes} of a V-bundle as follows. 
Let $F$ be a V-bundle of rank $r$ on an orbifold $X$.
We can first construct the Chern forms on $X_{\reg}$ as in the smooth case. To a Hermitian metric $h$ on $F$, we associate the Chern connection $D$ on $F$, and to $D$, we associate the curvature $D^2$. 
Let $\Xi$ be the corresponding $r\times r$ matrix of curvature 2-forms, then we set $c_k(h)=P_k(\frac{i}{2\pi}\Xi)\in \Gamma(X_{\reg},\mathcal{A}^{2k})$, where $P_k$ is the $k$-th elementary invariant polynomial 
function $\C^{r\times r}\rightarrow \C$. 

The same process can be also done on all local uniformizing systems $(U,V,G,\pi)$. The metric $\pi^*(h_{|U_{\reg}})$
extends to a Hermitian metric $\hat{h}$ on $\hat{F}_V$ which gives rise to the Chern connection $\hat{D}$ on $\hat{F}_V$ and hence the curvature $\hat{D}^2$. 
As previously, we can construct $c_k(\hat{h})\in \Gamma(V,\mathcal{A}_V^{2k})$.
By construction $\pi^*(c_k(h)_{|U_{\reg}})$ extends to $c_k(\hat{h})$ on $V$.
Hence, we obtain $c_k(h)\in \Gamma(X,\mathcal{A}^{2k})$. 
As in the smooth case, we show that $c_k(h)$ is a closed form, and that the cohomology class
$c_k(F):=\left[c_k(h)\right]\in H^{2k}(X,\C)$, called the $k$-th Chern class of $F$,
 does not depend on the choice of the metric $h$. 

Other characteristic classes, like Todd classes and Chern characters, are defined in terms of Chern classes by the usual formulas. A characteristic class of an orbifold is that of its tangent V-bundle.

\subsection{Riemann--Roch and Gauss--Bonnet theorems for orbifolds}\label{subsec:RRGB}
One key ingredient in the proof of Theorem \ref{main} is the following orbifold version of the Hirzebruch--Riemann--Roch theorem due to Blache \cite[Theorems 3.5 and 3.17]{Blache}.
\begin{thm}[Blache {\cite{Blache}}]\label{thm:RR}
Let $X$ be a compact complex orbifold with only isolated singularities and let $\mathscr{F}$ be a locally {\rm V}-free sheaf.
Then we have
$$\chi(X, \mathscr{F})=\int_X \ch(\mathscr{F})\cdot\td(X)+\sum_{x\in\Sing X}\left[\frac{1}{|G_x|}\sum_{\substack{g\in G_x\\ g\neq \id}} \frac{\tr(\rho_{x,\mathscr{F}}(g))}{\det(\id-\rho_{x, T_{X}}(g))}\right],$$
where $g$ is viewed as an endomorphism on $T_0V$ with $(U,V,G_x,\pi)$ a local uniformizing system of $x$.
\end{thm}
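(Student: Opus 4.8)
The plan is to interpret the left-hand side as the index of an elliptic operator and then apply a fixed-point localization. Concretely, $\chi(X,\mathscr{F})=\sum_i(-1)^i\dim H^i(X,\mathscr{F})$ is the index of the Dolbeault complex on $X$ twisted by the V-bundle $F$ corresponding to $\mathscr{F}$; choosing an orbifold Kähler metric and a metric on $F$ (in the sense of Definition~\ref{metric}), this index is computed by a McKean--Singer heat-kernel argument, or equivalently by invoking Kawasaki's orbifold index theorem. The key structural input is that the index localizes on the \emph{inertia orbifold}, whose connected components split into the \emph{untwisted sector}, namely $X$ itself, and the \emph{twisted sectors}, which are supported on the fixed loci of the nontrivial group elements $g\in G_x$. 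Because $X$ has only isolated singularities, for $x\in\Sing X$ and $g\in G_x\smallsetminus\{\id\}$ the fixed locus $\Fix(g)$ is contained in $\pi_x^{-1}(x)$ and is therefore the single point $0\in V_x$; thus every twisted sector is a point.

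First I would treat the untwisted sector. The local index density is the Atiyah--Singer integrand $\ch(F)\cdot\td(TX)$, and on the regular part $X_{\operatorname{reg}}$ it is represented by exactly the Chern--Weil forms built in the characteristic-class construction of Section~\ref{reminders}. Since the singular set has codimension $\geq 2$ (Remark~\ref{normal}), integrating this smooth density over $X_{\operatorname{reg}}$ recovers $\int_X \ch(\mathscr{F})\cdot\td(X)$ with no boundary contribution. This produces the main term.

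Next I would compute each twisted-sector contribution via the holomorphic Lefschetz fixed-point formula. Near $x$ one lifts everything to the uniformizing chart $(U_x,V_x,G_x,\pi_x)$, where $X$ is modeled on $V_x/G_x$ and $F$ is modeled by the $G_x$-equivariant bundle $\hat F_V$. For a nontrivial $g\in G_x$ with the origin as its only fixed point, the Atiyah--Bott formula assigns the contribution $\tfrac{\tr(\phi)}{\det(\id-dg_0)}$, where $dg_0=\rho_{x,T_X}(g)$ is the action of $g$ on $T_0V$ and $\phi=\rho_{x,\mathscr{F}}(g)$ is its action on the fiber $\hat{\mathscr{F}}_{V,0}$, in the notation of Notation~\ref{repre}. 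Here I would carefully verify the convention: the derivative of the biholomorphism $g$ at its fixed point is precisely $\rho_{x,T_X}(g)$, so the denominator is $\det(\id-\rho_{x,T_X}(g))$ with no inverse, matching the stated formula. The orbifold localization weights each point-sector by $\tfrac{1}{|G_x|}$, which averages the Lefschetz contributions over the local group; summing over $g\neq\id$ and over all $x\in\Sing X$ yields the correction term.

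The main obstacle is making the localization rigorous rather than merely formal. One must justify that the orbifold index genuinely equals the smooth-locus integral plus these isolated fixed-point contributions, which requires either a direct appeal to Kawasaki's theorem or a self-contained heat-kernel analysis: patching the ordinary heat kernel of the twisted Dolbeault Laplacian on $X_{\operatorname{reg}}$ with the $G_x$-equivariant heat kernels on the charts $V_x$ through a partition of unity, and controlling the off-diagonal exponential decay so that only the diagonal (untwisted) and the group-translated-diagonal (twisted) contributions survive in the $t\to 0$ limit. Extracting the correct normalization $\tfrac{1}{|G_x|}$ and the exact denominator $\det(\id-\rho_{x,T_X}(g))$ from the equivariant short-time expansion is where the delicate bookkeeping lies; once this is in place, the two sectors assemble into the asserted identity.
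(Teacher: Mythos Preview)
The paper does not supply a proof of this theorem: it is quoted from Blache \cite[Theorems 3.5 and 3.17]{Blache} and used as a black box throughout Sections \ref{sec:Proof} and Appendix \ref{App:Salamon}. So there is no ``paper's own proof'' to compare your proposal against.

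That said, your outline is the standard route to such a formula and is essentially what underlies Blache's result (and Kawasaki's earlier theorem \cite{Kawasaki}, which the paper itself invokes in \eqref{eqn:SpGeneral} for the non-isolated case). Localizing the index on the inertia orbifold, identifying the untwisted sector with the Chern--Weil integral $\int_X \ch(\mathscr{F})\cdot\td(X)$, and computing each twisted sector at an isolated fixed point via the holomorphic Lefschetz number $\tr(\rho_{x,\mathscr{F}}(g))/\det(\id-\rho_{x,T_X}(g))$ with weight $1/|G_x|$ is exactly the right picture. Your caution about the bookkeeping (the sign convention in the denominator, the $1/|G_x|$ averaging, and the heat-kernel patching) is well placed: these are precisely the points where a full proof does real work, and your sketch correctly flags them without resolving them. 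If you want to turn this into a self-contained argument you would either cite Kawasaki directly and specialize, or carry out the equivariant heat-kernel expansion; as written, your proposal is an accurate high-level summary rather than a proof.
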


Blache also established the following orbifold version of Gauss--Bonnet theorem.
\begin{thm}[{\cite[Theorem 2.14]{Blache}}]\label{cara}
Let $X$ be an $n$-dimensional compact complex orbifold with only isolated singularities. Then we have the following formula for its topological Euler characteristic:
$$\chi_{\operatorname{top}}(X)=\int_{X} c_n(X)+\sum_{x\in\Sing X}\left(1-\frac{1}{|\"aG_x|}\right),$$
where $G_x$ is the local fundamental group of $X$ at $x$.
\end{thm}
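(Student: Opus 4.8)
The plan is to deduce the formula from the orbifold Hirzebruch--Riemann--Roch theorem (Theorem \ref{thm:RR}) by running the classical argument that extracts Gauss--Bonnet from Riemann--Roch: take the alternating sum of the holomorphic Euler characteristics of the reflexive differential sheaves $\Omega_X^{[p]}$, $0\le p\le n$. The starting point is the identity
\[
\chi_{\operatorname{top}}(X)=\sum_{p=0}^{n}(-1)^{p}\chi\big(X,\Omega_X^{[p]}\big).
\]
On a compact complex manifold this is a formal consequence of the Fr\"olicher (Hodge-to-de Rham) spectral sequence, since with $E_1^{p,q}=H^{q}(X,\Omega_X^{p})$ the alternating sum $\sum_{p,q}(-1)^{p+q}\dim E_r^{p,q}$ is independent of the page $r$, equalling $\sum_{p}(-1)^{p}\chi(\Omega_X^{p})$ at $r=1$ and $\sum_{k}(-1)^{k}b_k(X)=\chi_{\operatorname{top}}(X)$ at $r=\infty$; crucially, only the invariance of the Euler characteristic, not degeneration, is used, so no K\"ahler hypothesis is required. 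I would run the same argument on the orbifold with $\Omega_X^{p}$ replaced by the reflexive sheaves $\Omega_X^{[p]}$, using the orbifold Hodge decomposition of $H^{*}(X,\C)$ recalled above.

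Applying Theorem \ref{thm:RR} to each $\mathscr{F}=\Omega_X^{[p]}$ and taking the alternating sum separates the expression into a global integral and a sum of local contributions. For the global part I would use the standard Chern-root computation: writing $x_1,\dots,x_n$ for the Chern roots of $T_X$, the Chern roots of $\Omega_X^{[1]}$ are $-x_1,\dots,-x_n$, so $\sum_{p}(-1)^{p}\ch(\Omega_X^{[p]})=\prod_{i}(1-e^{-x_i})$, while $\td(X)=\prod_{i}x_i/(1-e^{-x_i})$. Their product telescopes to $\prod_{i}x_i=c_n(X)$, so the global part contributes exactly $\int_X c_n(X)$.

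For the local part, fix a singular point $x$ with local uniformizing system $(U,V,G_x,\pi)$ and a nontrivial $g\in G_x$. Because the singularity is isolated, $g$ fixes no nonzero tangent vector, so no eigenvalue $\lambda_i$ of $\rho_{x,T_X}(g)$ equals $1$ and $\det(\id-\rho_{x,T_X}(g))\ne 0$. The action of $g$ on reflexive $p$-forms is the $p$-th exterior power of its pull-back action on $1$-forms, and the latter has the same eigenvalues $\lambda_1,\dots,\lambda_n$ as $\rho_{x,T_X}(g)$ (the cotangent action being the transpose of the tangent one). Hence $\tr(\rho_{x,\Omega_X^{[p]}}(g))=e_p(\lambda_1,\dots,\lambda_n)$, the $p$-th elementary symmetric function, and
\[
\sum_{p=0}^{n}(-1)^{p}\tr\big(\rho_{x,\Omega_X^{[p]}}(g)\big)=\prod_{i=1}^{n}(1-\lambda_i)=\det\big(\id-\rho_{x,T_X}(g)\big),
\]
which cancels the denominator in Theorem \ref{thm:RR} and leaves $1$ for each $g\ne\id$. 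Summing over the $|G_x|-1$ nontrivial elements and dividing by $|G_x|$ yields the local contribution $1-\tfrac{1}{|G_x|}$, and adding over all singular points gives the stated correction.

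The step I expect to be the main obstacle is the first one in the orbifold, possibly non-K\"ahler, setting: justifying the Hodge-to-de Rham / Euler-characteristic identity for reflexive differentials on a singular space, and verifying that the twisted-sector representations $\rho_{x,\Omega_X^{[p]}}$ are exactly the exterior powers of the cotangent action with a convention compatible with the denominator of Theorem \ref{thm:RR}, so that the cancellation in the local term is exact. Once these compatibilities are secured, the remaining computation is the routine Chern-root and character bookkeeping sketched above.
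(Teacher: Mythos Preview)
The paper does not give its own proof of this statement; Theorem~\ref{cara} is simply quoted from Blache. That said, the ingredients of your argument do appear scattered through the paper: the global Chern-root telescoping you describe is exactly the $k=0$ case of the computation in Appendix~\ref{App:Salamon} (equation~\eqref{eqn:k=0}, obtained there via Salamon's polynomial $K(t)$ rather than by direct telescoping), and the local cancellation you carry out is recorded in dimension~4 as the last assertion of Proposition~\ref{RR}, namely $\eta=\sum_{i}(-1)^{i}S_i$. So your proposal is correct and is essentially the argument the paper has in the background; the only cosmetic difference is that you compute $\sum_p(-1)^p\ch(\Omega_X^{[p]})\cdot\td(X)=c_n$ by direct factorisation, while the appendix packages the same identity as $K(0)=c_n$ and $\int_X K(t)=\chi_{\orb}(-1-t)$.

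Your self-identified caveat is the right one. The paper only invokes the Hodge decomposition $H^k(X,\C)=\bigoplus H^q(X,\Omega_X^{[p]})$ under a K\"ahler hypothesis (Remark on Hodge decomposition in Section~\ref{reminders}), so to cover the general compact complex case you genuinely need the orbifold Fr\"olicher spectral sequence with $E_1^{p,q}=H^q(X,\Omega_X^{[p]})$ abutting to $H^{p+q}(X,\C)$. This is available via Satake's de~Rham theorem for V-manifolds together with the Dolbeault resolution on local uniformizing systems, and the page-independence of the Euler characteristic then gives $\chi_{\operatorname{top}}(X)=\sum_p(-1)^p\chi(X,\Omega_X^{[p]})$ without any degeneration hypothesis. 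Once that is secured, the eigenvalue bookkeeping in your local step is consistent with the paper's conventions (compare the computation in Section~\ref{esti}, where $\tr(\rho_{x,\Omega_X^{[1]}}(g))$ is taken to have the same eigenvalues as $g$ on $T_0V$), and the cancellation $\sum_p(-1)^p e_p(\lambda)=\det(\id-g)$ goes through exactly as you wrote.
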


\section{Bounding Betti numbers and singularities}\label{sec:Proof}
The aim of this section is to show Theorem \ref{main} and Theorem \ref{main2}. Let us first make precise the class of (possibly singular) symplectic varieties that we consider.

\subsection{Symplectic orbifolds} 
\begin{defi}[Fujiki \cite{Fujiki}]\label{def:PSO}
A compact K\"ahler orbifold $X$ is called \emph{primitively symplectic} if
\begin{enumerate}[$(i)$]
\item
the smooth locus $X_{\operatorname{reg}}:=X\smallsetminus \Sing X$ is endowed with a non-degenerated holomorphic 2-form which is unique up to scalar; and
\item
the singular locus $\Sing X$ has codimension at least 4.
\end{enumerate}
If moreover $X_{\operatorname{reg}}$ is simply connected, $X$ is called an \emph{irreducible symplectic orbifold}. \end{defi}

\begin{rmk}
As in the smooth case, a primitively symplectic orbifold $X$ has even (complex) dimension and trivial dualizing sheaf $\omega_{X}\simeq \mathcal{O}_{X}$. Moreover, the symplectic form extends to a symplectic form on any local uniformizing system. In particular, the contraction with the symplectic form induces an isomorphism $T_{X}\simeq \Omega_{X}^{[1]}$. By definition, if $X$ has dimension 4, then $X$ has isolated quotient singularities. 
\end{rmk}

\begin{rmk}
As quotient singularities are rational singularities, the singularities appearing in Definition \ref{def:PSO} are symplectic singularities in the sense of Beauville \cite{BeauvilleSympSing}.  Moreover, an irreducible symplectic orbifold defined above is an irreducible symplectic variety in the sense of \cite{GKP16} and \cite[Definition 1.4]{HP19}.
\end{rmk}
\begin{rmk}[Hodge diamond]\label{b1}
Let $X$ be a 4-dimensional primitively symplectic orbifold. Fujiki {\cite[Proposition 6.7]{Fujiki}}  showed that $X$ has vanishing irregularity, hence $b_{1}(X)=0$. Serre--Grothendieck duality gives that $$H^{3}(X, \Omega_{X}^{[1]})\cong H^{1}(X, \omega_{X}\otimes T_{X})^{\vee}\cong H^{1}(X, \Omega_{X}^{[1]})^{\vee}.$$ In particular, $h^{3,1}=h^{1,1}$. Similarly, $h^{3,0}=h^{1,0}=0$. In conclusion, the Hodge diamond of $X$ takes the following form.
\begin{center}
	\begin{tabular}{ccccccccc}
	&&&&1&&&&\\
	&&&0&&0&&&\\
	&&1&&$h^{1,1}$&&1&&\\
	&0&&$h^{2,1}$&&$h^{2,1}$&&0&\\
	1&&$h^{1,1}$&&$h^{2,2}$&&$h^{1,1}$&&1.\\
	&0&&$h^{2,1}$&&$h^{2,1}$&&0&\\
	&&1&&$h^{1,1}$&&1&&\\
	&&&0&&0&&&\\
	&&&&1&&&&
	\end{tabular}
	\end{center}	
\end{rmk}

\subsection{Quotient symplectic singularities in dimension 4}
For later use, we classify in this section all symplectic quotient singularities 
in dimension 4. As the germ of a quotient singularity is determined by the local fundamental group, one needs to classify all finite subgroups of the Lie group $\Sp(4,\C)$. Since any finite subgroup must be contained in some compact maximal subgroup, we are to classify finite subgroups of the \emph{compact} symplectic group 
$\Sp(2):=\Sp(4,\C)\cap \SU(4)$.
\begin{prop}\label{classification}
Let $n>0$ be an integer, we denote $\xi_n:=e^{\frac{2i\pi}{n}}$ the primitive $n$-th root of unity.
For integers $1\leq k\leq n$, we denote
$$T_{n,k}:=\begin{pmatrix}
      0 & 0 & 1 & 0\\
      0 & 0 & 0 & 1\\
      \xi_n^k & 0 & 0 & 0\\
      0 & \xi_n^{-k} & 0 & 0
     \end{pmatrix}.$$
Let $G$ be a finite subgroup of the compact symplectic group $\Sp(2)$. 
Then, up to conjugation,
\begin{itemize}
\item[$(i)$]
there exists finite subgroups $H_1$, $H_2$ of $\SU(2)$, 
 integers $n>0$ and $k\in \{1,...,n\}$, and a normal subgroup
$G'$ of $G$ of index at most 2, such that any element $M'$ of $ G'$ has the form
$$M'=\begin{pmatrix}A & 0\\ 0 & B
      \end{pmatrix},$$
with $A\in H_1$, $B\in H_2$, and 
$G/G'=\left<\overline{T_{n,k}}\right>$  if $G'\neq G$.
\item[$(ii)$]
If moreover $\C^4/G$ has only the image of $0$ as singularity, then
there exists  a finite subgroup $H$ of $\SU(2)$ and $\theta$ an automorphism of $H$
such that any element $M\in G'$ has the form
$$M=\begin{pmatrix}A & 0\\ 0 & \theta(A)
      \end{pmatrix},$$
for some $A\in H$.
\end{itemize}
\end{prop}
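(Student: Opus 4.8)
The plan is to realize the standard representation $V=\C^4$ of a finite subgroup $G\subset\Sp(2)$ as a quaternionic representation and to locate inside it a $G$-stable splitting into two orthogonal quaternionic lines. Since $\Sp(2)=\Sp(4,\C)\cap\SU(4)$, the space $V$ carries both a $G$-invariant symplectic form $\omega$ and a $G$-invariant Hermitian metric $h$, and their compatibility equips $V$ with a $G$-equivariant quaternionic structure $j$ (antilinear, $j^2=-\id$, commuting with $G$), identifying $V$ with a rank-$2$ right $\mathbb{H}$-module on which $G$ acts $\mathbb{H}$-linearly and unitarily, i.e. $G\subset\U(2,\mathbb{H})=\Sp(2)$. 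In this picture the block-diagonal subgroup $\SU(2)\times\SU(2)\cong\Sp(1)\times\Sp(1)$ is precisely the stabilizer of an ordered pair of orthogonal quaternionic lines, its normalizer $N$ satisfies $N/(\SU(2)\times\SU(2))\cong\Z/2$ with the nontrivial class represented by a coordinate swap, and $T_{n,k}$ is one such swap: one computes $T_{n,k}^2=\diag(\xi_n^k,\xi_n^{-k},\xi_n^k,\xi_n^{-k})$, confirming that $\overline{T_{n,k}}$ has order $2$. Thus part $(i)$ amounts to conjugating $G$ into $N$.

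For part $(i)$ I would first treat the case in which $V$ is reducible as a $\C[G]$-module. Decomposing $V$ into irreducible constituents and using that $\omega$ is a perfect $G$-invariant pairing, one groups the constituents into $\omega$-orthogonal summands, each of which is either a $2$-dimensional symplectic-type (quaternionic) irreducible — a quaternionic line — or a hyperbolic pair of mutually $\omega$-dual constituents. Pairing $\omega$-dually, and for a torus pairing a character with its inverse so that the block takes the form $\diag(\lambda,\lambda^{-1})\in\SU(2)$, produces a $G$-stable orthogonal decomposition $V=L_1\oplus L_2$ into quaternionic lines. Setting $G':=\{g\in G:gL_1=L_1\}$ gives a normal subgroup of index at most $2$; choosing compatible orthonormal $\mathbb{H}$-bases of $L_1,L_2$ writes every element of $G'$ as $\diag(A,B)$ with $A,B\in\Sp(1)=\SU(2)$, and the images of the two projections furnish $H_1,H_2$. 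If $G'\neq G$, any $\tau\in G\smallsetminus G'$ interchanges $L_1$ and $L_2$, so $\tau^2\in G'$ is block-diagonal; conjugating by $\Sp(1)\times\Sp(1)$ to diagonalize $\tau^2$ and normalize $\tau$ brings it to the shape $T_{n,k}$, with $n$ and $k$ read off from the order and eigenvalues of $\tau^2$. This last normalization is a routine normal-form computation that I would not carry out.

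Part $(ii)$ follows quickly from $(i)$ together with the hypothesis that $\C^4/G$ is smooth away from the image of $0$, i.e. that $G$ acts freely on $V\smallsetminus\{0\}$. For a block-diagonal element $\diag(A,B)\in G'$ this forbids $A=\id$ unless $B=\id$, and conversely, since an element of $\SU(2)$ has eigenvalue $1$ only if it is the identity; hence both projections $G'\to H_1$ and $G'\to H_2$ have trivial kernel and are isomorphisms. Composing them gives an isomorphism $\theta:=\mathrm{pr}_2\circ\mathrm{pr}_1^{-1}\colon H_1\xrightarrow{\sim}H_2$, and since isomorphic finite subgroups of $\SU(2)$ are conjugate, a further block-diagonal conjugation arranges $H_2=H_1=:H$, so that $\theta\in\mathrm{Aut}(H)$ and every $M\in G'$ takes the form $\diag(A,\theta(A))$ with $A\in H$, as required.

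The step I expect to be the main obstacle is exactly the construction of the $G$-stable quaternionic splitting in part $(i)$: the argument above produces it only when $V$ is reducible, whereas one must also control the case where $G$ acts $\mathbb{H}$-irreducibly, equivalently where $V$ is an irreducible $4$-dimensional complex representation of quaternionic type — such actions do occur, for instance through the principal embedding $\SU(2)\hookrightarrow\Sp(2)$ given by $\Sym^3$ of the standard representation. A subtlety of the same flavour is that a hyperbolic summand $U\oplus U^*$ with $U$ of genuinely complex type possesses no $G$-stable symplectic plane, so that the diagonal blocks need not a priori lie in $\SU(2)$; reconciling these configurations with the stated normal form is where the real work is concentrated, and I would expect to need a finer analysis through the Weyl group of $\Sp(2)$ and the classification of finite subgroups of $\SU(2)$ rather than the soft representation-theoretic splitting used in the reducible case.
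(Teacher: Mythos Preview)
Your approach differs substantially from the paper's. You attempt a direct, intrinsic argument via the quaternionic structure and a representation-theoretic decomposition of $V=\C^4$; the paper instead invokes the Hanany--He classification of finite subgroups of $\SU(4)$ as a black box and proceeds by exhaustive case-checking. Concretely, the paper runs through the families I--XXXIII of that classification and for each one exhibits explicit matrices among the listed generators that cannot simultaneously preserve a symplectic form, thereby excluding them; the only surviving possibilities are the intransitive (reducible) groups --- for which a short computation shows that the apparent $1\times\SU(3)$-type blocks are forced, by the symplectic constraint, to be $\SU(2)\times\SU(2)$-block-diagonal --- and the family XXXIV, which by definition is generated by a block-diagonal subgroup together with a matrix $T_{n,k}$. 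Your argument for part~$(ii)$, via injectivity of the two projections $G'\to H_i$, is essentially identical to the paper's.

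The obstacle you isolate --- the $\mathbb{H}$-irreducible case, and the hyperbolic summand $U\oplus U^{*}$ with $U$ of genuinely complex type --- is real, and your proposal does not close it. The paper does not resolve these cases by any structural argument either: it simply inherits the answer from the Hanany--He list, so that once each family I--XXXIII has been eliminated one matrix-pair at a time, no irreducible symplectic example survives. If you want your route to stand on its own, you would need an independent proof that every finite subgroup of $\Sp(2)$ is either reducible on $\C^4$ or preserves an unordered pair of orthogonal complex $2$-planes; this is precisely the content the external classification is supplying, and it is not clear how to extract it from the soft decomposition argument you outline (your suggestion of passing through the Weyl group of $\Sp(2)$ and the ADE list would at best recover the imprimitive case, not rule out the primitive one).
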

\begin{proof}
Hanany and He classified in \cite{Hanany} the finite subgroups of $\SU(4)$. 
Hence it is enough to identify those groups in the list that preserve a symplectic form. In the sequel, we follow their notation. 

The first category of groups are the so-called \emph{primitive simple groups} described in \cite[Section 3.1.1]{Hanany}
and they are numbered from I to VI. However, none of them are symplectic. Indeed, the following matrices are considered:
$$F_1:=\begin{pmatrix}1 & 0 & 0 & 0\\ 0 & 1 & 0 & 0\\ 0 & 0 & w & 0\\ 0 & 0 & 0 & w^2 \end{pmatrix},\ 
F_2:=\frac{1}{\sqrt{3}}\begin{pmatrix}1 & 0 & 0 & \sqrt{2}\\0 & -1 & \sqrt{2} & 0 \\ 0 & \sqrt{2} & 1 & 0\\ \sqrt{2} & 0 & 0 & -1
\end{pmatrix}\ \text{and}\ F_2':=\frac{1}{3}\begin{pmatrix}3 & 0 & 0 & 0\\0 & -1 & 2 & 2 \\ 0 & 2 & -1 & 2\\ 0 & 2 & 2 & -1
\end{pmatrix},$$
where $w:=e^{\frac{2i\pi}{3}}$.
The matrices $F_1$ and $F_2$ do not preserve any common symplectic form, hence the groups I and III, 
which are partially generated by these two matrices, cannot be symplectic.
Similarly, the group II cannot be symplectic because it is partially
generated by the two matrices $F_1$ and $F_2'$ which do not fix the same symplectic form.

Let $\beta:=e^{\frac{2i\pi}{7}}$.
The matrices $S:=\diag(1,\beta, \beta^4, \beta^2)$ and $D:=\diag(w,w,w,1)$ 
are not symplectic, hence the groups IV, V and VI, which are partially generated by one of these two matrices, are not symplectic. 

In \cite[Section 3.1.2]{Hanany}, Hanany and He consider the groups VII, VIII and IX which cannot be
symplectic since they are partially generated by the groups I, II and III.

In \cite[Section 3.1.3]{Hanany}, they consider group obtained from Kronecker products of matrices of $\SU(2)$.
Let $$S_{\SU(2)}:=\frac{1}{2}\begin{pmatrix}-1+i& -1+i\\ 1+i & -1-i\end{pmatrix}\ 
\text{and}\ U_{\SU(2)}:=\frac{1}{\sqrt{2}}\begin{pmatrix}1+i& 0\\ 0 & 1-i\end{pmatrix}.$$
The following couples of matrices $(S_{\SU(2)}\otimes S_{\SU(2)},U_{\SU(2)}\otimes U_{\SU(2)})$ and 
$(S_{\SU(2)}\otimes S_{\SU(2)},U_{\SU(2)}^2\otimes U_{\SU(2)}^2)$ both do not fix the same symplectic form.
Hence the groups from X to XVI cannot be symplectic 
since they are all partially generated by one of theses couples of matrices.
Also the groups from XVII to XXI cannot be symplectic because they are partially generated by the groups XI, X, XVI and XIV.

The matrices $A_1:=\diag(1,1,-1,-1)$ and $A_2:=\diag(1,-1,-1,1)$ do not fix the same symplectic form. 
Hence all the groups from XXII to XXX are not symplectic since they are all partially generated by these two matrices.

Finally, we consider the group:
$$\Delta:=\left\{\left.\diag(w^j, w^k,  w^{l}, w^{-j-k-l})\ \right|\ w=e^{\frac{2i\pi}{n}},\ j, k, l\in\{1,...,n\}\right\},$$
which is not symplectic, hence all the groups from XXXI to XXXIII which are partially generated by $\Delta$ are not symplectic.

It only remains to consider the group XXXIV and the intransitive groups. We will study the group XXXIV in the end. 
The intransitive groups are the groups coming from diagonal embedding of subgroups of $\SU(2)$ or $\SU(3)$
(see \cite[Definition 2.1]{Hanany} for the precise definition).
We will see that all the symplectic groups constructed from a diagonal embedding of a subgroup of $\SU(3)$ are actually constructed from a
diagonal embedding of subgroups of $\SU(2)$.
Let $G$ be such a group and $M$ an element in $G$. Let $(e_1, e_2, e_3, e_4)$ be the canonical basis of 
$\C^4$. We have: 
$$M=\begin{pmatrix}
   \xi & 0\\
   0 & A
  \end{pmatrix},$$
  where $\xi$ is a root of the unity and $A\in \U(3)$.
We can find a basis $(v_1,v_2,v_3)$ of $\C^3$ in which $A$ is diagonalized: $A=\diag(\xi^{-1},\zeta,\zeta^{-1})$.
In the basis $(e_1,v_1,v_2,v_3)$, the symplectic form has to be 
$$J:=\begin{pmatrix}
                  0 & 1 & 0 & 0\\
                  -1 & 0 & 0 & 0\\
                  0 & 0 & 0 & 1\\
                  0 & 0 & -1 & 0
                 \end{pmatrix}.$$
We consider now another matrix of $G$ expressed in the basis $(e_1,v_1,v_2,v_3)$:                 
$$N:=\begin{pmatrix}
                  a & 0 & 0 & 0\\
                  0 & b & c & d\\
                  0 & e & f & g\\
                  0 & h & j & k
                 \end{pmatrix},$$
where $a,b,c,d,e,f,g,h,j,k$ are in $\C$. 
Since $N$ is symplectic. It follows:
\begin{align*}
 ab&=1\\
 ac&= 0\\
 ad&=0\\
 -hf+ej&=0\\
 -hg+ek&=0\\
 -jg+fk&=1.
\end{align*}
Hence $c=d=0$. If $h\neq 0$, then $f=\frac{ej}{h}$ and $g=\frac{ek}{h}$.
This is impossible because, it contradicts $-jg+fk=1$. So $h=0$. 
For the same reason $e=0$ and $G$ is actually a group composed by matrices of the forms: 
$$N:=\begin{pmatrix}
                  a & 0 & 0 & 0\\
                  0 & b & 0 & 0\\
                  0 & 0 & f & g\\
                  0 & 0 & j & k
                 \end{pmatrix}.$$
It only remains to study the case of the group XXXIV.
In this case, $G=\left<G_0,T_{n,k}\right>$, with $G_0$ composed of matrices of type
$$\begin{pmatrix}A& 0\\ 0 & B \end{pmatrix},\text{ with }\ A,B\in \SU(2).$$
We consider 
$$G':=\left\{M=\begin{pmatrix}A& 0\\ 0 & B \end{pmatrix},\in G\left |\ A,B\in \SU(2)\right.\right\}.$$
The group $G'$ is a normal subgroup of $G$ and the class $\overline{T_{n,k}}\in G/G'$ has order 2.

Now, we prove $(ii)$. 
Let $G$ be a finite subgroup of $\Sp(2)$ such that $\C^4/G$ admits only 0 as singularities.
Then necessarily, the unique element of $G$ with the eigenvalue 1 is the identity. In particular, this is true for $G'$.
Therefore, the following projections are isomorphisms:
$$P_1:G'\longrightarrow H_1,\ \begin{pmatrix}A& 0\\ 0 & B \end{pmatrix}\mapsto A,\ \text{and}\ 
P_2:G'\longrightarrow H_2,\ \begin{pmatrix}A& 0\\ 0 & B \end{pmatrix}\mapsto B.$$
So, setting $\theta:=P_2\circ P_1^{-1}$ finishes the proof.
\end{proof}

\subsection{Orbifold Salamon relation}\label{secRR}
Salamon \cite[(0.1)]{Salamon} discovered a remarkable linear relation among the Betti/Hodge numbers of a compact hyper-K\"ahler manifold. By applying Blache's orbifold Hirzebruch--Riemann--Roch Theorem \ref{thm:RR}, we will establish (Proposition \ref{RR} below) a Salamon-type relation between the Hodge numbers and information from the singularities of a 4-dimensional primitively symplectic orbifold. A more general result can be obtained by adapting Salamon's method. However, we decide to give an elementary proof here and leave the general result in Appendix \ref{App:Salamon}.

 In what follows,  $X$ is a primitively symplectic orbifold of dimension 4. For any (necessarily isolated) singular point $x\in X$, $G_x$ is the local fundamental group of $X$ at $x$ and $\rho_{x,\bullet}$ is the representation of $G_x$ defined in Notation \ref{repre}. 
\begin{itemize}
\item Define for any integer $p\geq 0$,
\begin{equation}\label{eqn:Si}
S_p:=\sum_{x\in\Sing X} \frac{1}{|G_x|}\sum_{g\neq\id}\frac{\tr(\rho_{x,\Omega_X^{[p]}}(g))}{\det(\id-\rho_{x, T_{X}}(g))}.
\end{equation}
\item Applying Theorem \ref{thm:RR} to $\mathscr{F}=\mathcal{O}_X$, we obtain:
\begin{equation}
\int_{X}\td_4(X)=3-S_0,
\label{0}
\end{equation}
\item Applying Theorem \ref{thm:RR} to $\mathscr{F}=\Omega_X^{[1]}$, we obtain, using Remark \ref{b1}:
\begin{equation}
h^{2,1}(X)-2h^{1,1}(X)=4\int_{X}\td_4(X)-\frac{1}{6}\int_{X}c_4(X)+S_1,
\label{1}
\end{equation}
\item Applying Theorem \ref{thm:RR} to $\mathscr{F}=\Omega_X^{[2]}$, we obtain, using Remark \ref{b1}:
\begin{equation}
2-2h^{2,1}(X)+h^{2,2}(X)=6\int_{X}\td_4(X)+\frac{2}{3}\int_{X}c_4(X)+S_2,
\label{2}
\end{equation}
\item Applying Theorem \ref{cara}, we obtain:
\begin{equation}\label{caraequa}
8+4h^{1,1}-4h^{2,1}+h^{2,2}=\int_{X}c_4(X)+\eta,
\end{equation}
where 
\begin{equation}\label{eqn:eta}
\eta=\sum_{x\in \Sing X}\left(1-\frac{1}{|G_x|}\right).
\end{equation}
\end{itemize}
Combining (\ref{0}), (\ref{1}), and (\ref{caraequa}), we can eliminate $\int_{X}c_4(X)$ and $\int_{X}\td_4(X)$ to obtain:
\begin{equation}
2h^{2,1}+h^{2,2}-8h^{1,1}=64+6(S_1-4S_0)+\eta.
\label{equa1}
\end{equation}
Similarly, by combining \eqref{0}, \eqref{2}, and \eqref{caraequa}, it yields that
\begin{equation}
2h^{2,1}+h^{2,2}-8h^{1,1}=64+3(S_2-6S_0)-2\eta.
\label{equa2}
\end{equation}
Then, (\ref{equa1}) and (\ref{equa2}) provide the following orbifold version (in dimension 4) of Salamon's famous relation \cite{Salamon} for hyper-K\"ahler varieties. See Appendix \ref{App:Salamon} for a generalization.
\begin{prop}[Orbifold Salamon relation]\label{RR}
Let $X$ be a primitively symplectic orbifold of dimension 4. We have:
\begin{equation}\label{eqn:SalamonOrb}
2h^{2,1}+h^{2,2}-8h^{1,1}=64+s,
\end{equation}
or equivalently,
\begin{equation*}
b_{4}+b_{3}-10b_{2}=46+s,
\end{equation*}
where
\begin{equation}\label{eqn:s}
 s:=6(S_1-4S_0)+\eta=3(S_2-6S_0)-2\eta=4S_{1}+S_{2}-22S_{0}
\end{equation} is a correction term determined by the singularities. In particular:
$$\eta=S_2-2S_1+2S_0=\sum_{i=0}^{4}(-1)^{i}S_{i}.$$
\end{prop}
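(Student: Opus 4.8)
The plan is to take the displayed identities \eqref{equa1} and \eqref{equa2} as the starting point, since both have already been produced from \eqref{0}, \eqref{1}, \eqref{2} and \eqref{caraequa} by eliminating the two intersection numbers $\int_X\td_4(X)$ and $\int_X c_4(X)$. The main relation \eqref{eqn:SalamonOrb} is then nothing but \eqref{equa1} after setting $s:=6(S_1-4S_0)+\eta$, and the Betti-number formulation follows by inserting the Hodge diamond of Remark \ref{b1}, namely $b_2=h^{1,1}+2$, $b_3=2h^{2,1}$ and $b_4=2h^{1,1}+h^{2,2}+2$. A direct substitution shows $b_4+b_3-10b_2=\bigl(2h^{2,1}+h^{2,2}-8h^{1,1}\bigr)-18$, so \eqref{eqn:SalamonOrb} is equivalent to $b_4+b_3-10b_2=46+s$.

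Next I would reconcile the three expressions for $s$ in \eqref{eqn:s}. Since the left-hand sides of \eqref{equa1} and \eqref{equa2} coincide, equating the right-hand sides gives $6(S_1-4S_0)+\eta=3(S_2-6S_0)-2\eta$, which simplifies to $\eta=S_2-2S_1+2S_0$. Feeding this back into $s=6(S_1-4S_0)+\eta$ produces the third form $s=4S_1+S_2-22S_0$, so all three expressions for $s$ agree and, in particular, $\eta=S_2-2S_1+2S_0$ is established.

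The remaining and least formal point is the identity $\eta=\sum_{i=0}^{4}(-1)^iS_i$. Granting $\eta=S_2-2S_1+2S_0$ from the previous step, it suffices to prove the two symmetries $S_0=S_4$ and $S_1=S_3$, for then $\sum_{i=0}^{4}(-1)^iS_i=2S_0-2S_1+S_2=\eta$. Both symmetries come from the symplectic structure at each isolated singular point $x$: the local fundamental group $G_x$ acts on $T_0V\cong\C^4$ through $\Sp(4,\C)$, so for every $g\neq\id$ the eigenvalue multiset of $\rho_{x,T_X}(g)$ is stable under inversion and has product $1$ (the latter reflecting $\omega_X\cong\mathcal{O}_X$). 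Because the denominator $\det(\id-\rho_{x,T_X}(g))$ is common to every $S_p$, it is enough to compare numerators. Now $\rho_{x,\Omega_X^{[0]}}$ is trivial while $\tr\rho_{x,\Omega_X^{[4]}}(g)=\det\rho_{x,T_X}(g)^{-1}=1$, giving $S_0=S_4$; and since $\Omega_X^{[3]}=\Lambda^3\Omega_X^{[1]}$ with $\Lambda^4\Omega_X^{[1]}$ trivial, the eigenvalues of $\rho_{x,\Omega_X^{[3]}}(g)$ are the inverses of those of $\rho_{x,\Omega_X^{[1]}}(g)$, which—being an inverse-stable multiset—have the same sum, so $\tr\rho_{x,\Omega_X^{[3]}}(g)=\tr\rho_{x,\Omega_X^{[1]}}(g)$ and hence $S_1=S_3$.

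I expect the main obstacle to be exactly this last step: one must track carefully how the representations attached to the reflexive powers $\Omega_X^{[p]}$ behave under the symplectic isomorphism $T_X\cong\Omega_X^{[1]}$ and under the duality $\Lambda^p(-)^{\vee}\cong\Lambda^{4-p}(-)\otimes\det$, and verify that the common denominators let the pointwise trace identities propagate to the summed quantities $S_p$. The algebraic manipulations in the first two paragraphs are routine once \eqref{equa1} and \eqref{equa2} are in hand.
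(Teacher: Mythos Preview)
Your proposal is correct and follows essentially the same route as the paper: the main relation and the equality of the three expressions for $s$ are obtained exactly by combining \eqref{equa1} and \eqref{equa2}, and the Betti-number reformulation is the straightforward substitution you indicate. The only minor difference is in justifying $S_2-2S_1+2S_0=\sum_{i=0}^4(-1)^iS_i$: the paper does not argue this in Section~\ref{secRR} but defers the symmetry $S_p=S_{n-p}$ to Remark~\ref{rmk:SpSymmetry} in the appendix, where it is deduced from Serre duality (via $\chi_p=(-1)^n\chi_{n-p}$ and $\chi_p^{\orb}=(-1)^n\chi_{n-p}^{\orb}$), whereas you give a direct eigenvalue argument using the symplectic form at each singular point---both are valid and equally short.
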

\begin{rmk}
Proposition \ref{RR} shows that the knowledge of $h^{1,1}$, $h^{2,1}$ and the singularities is enough to compute the topological Euler characteristic and all the Betti numbers of a 4-dimensional primitively symplectic orbifold.
\end{rmk}

\subsection{Estimate of the contribution of singularities}\label{esti}
We turn to a more careful study of the quantity $s$ in the orbifold Salamon relation \eqref{eqn:SalamonOrb}, which is the local contribution of singularities. Using \eqref{eqn:Si}, \eqref{eqn:eta} and \eqref{eqn:s}, we can write $s=\sum_{x\in \operatorname{Sing} X} s_x$ with:
\begin{equation}
s_x=\frac{1}{|G_x|}\left[6\left(\sum_{\substack{g\in G_x\\g\neq \id}}\frac{\tr(\rho_{x,\Omega_X^{[1]}}(g))-4}{\det(\rho_{x, T_{X}}(g)-\id)}\right)+|G_x|-1\right],
\label{s}
\end{equation}
with $(U,V,G_x,\pi)$ a local uniformizing system around $x$. In particular, the action of $g\in G_x$ on $T_{V,0}$ is symplectic.
We can therefore write that  $g=\diag(\xi_1,\xi_2,\xi_1^{-1},\xi_2^{-1})$, with $\xi_j=e^{\frac{2i\pi k_{j}}{n_j}}$, $k_{j}, n_j\in\N$ for all $j\in\left\{1,2\right\}$. Hence:
\begin{align*}
\frac{\tr(\rho_{x,\Omega_X^1}(g))-4}{\det(g-\id)}&=\frac{2(\cos(\frac{2\pi k_{1}}{n_1})+\cos(\frac{2\pi k_{2}}{n_2}))-4}{4(1-\cos(\frac{2\pi k_{1}}{n_1}))(1-\cos(\frac{2\pi k_{2}}{n_2}))}\\
&=-\frac{1}{2(1-\cos(\frac{2\pi k_{1}}{n_1}))}-\frac{1}{2(1-\cos(\frac{2\pi k_{2}}{n_2}))}.
\end{align*}
So:
\begin{equation}
 \frac{\tr(\rho_{x,\Omega_X^1}(g))-4}{\det(g-\id)}\leq -\frac{1}{2}
 \label{trdet}
\end{equation}

Hence for any $x\in \Sing X$, we have
\begin{equation}\label{sx}
s_x\leq -2\left(\frac{|G_x|-1}{|G_x|}\right).
\end{equation}
In particular, $s_{x}\leq -1$ and the quantity $s$, which is an integer by \eqref{eqn:SalamonOrb}, is at most $-|\Sing X|$.

Using Proposition \ref{classification}, we can be more precise. The local fundamental group $G_x$ is a finite subgroup of $\Sp(2)$.
Hence, there exists a normal subgroup $G'$ of $G_{x}$ of index at most 2,  $H$ a finite subgroup of $\SU(2)$ and an automorphism $\theta$ of $H$
such that any element
$M\in G'$ has the form
$$M=\begin{pmatrix}
     A& 0\\
     0 & \theta(A)
    \end{pmatrix},$$
    with $A\in H$.
As we have noticed previously, if $A$ is a matrix of $\SU(2)$ of finite order, we have
$$\det(A-\id)=-\tr(A)+2.$$
Therefore
$$\sum_{\substack{g\in G'\\g\neq \id}}\frac{\tr(\rho_{x,\Omega_X^{[1]}}(g))-4}{\det(\rho_{x, T_{X}}(g)-\id)}=-\sum_{\substack{g\in G'\\g\neq \id}}\left(\frac{1}{2-\tr(A)}+\frac{1}{2-\tr(\theta(A))}\right),$$
   where on the right-hand side, we write a non-trivial element $g$ of $G'$ as  $\begin{pmatrix}
     A& 0\\
     0 & \theta(A)
    \end{pmatrix}$ for $A\in H$.  
Reordering the sum of the second term, we obtain the following equation:
\begin{equation}
 \sum_{\substack{g\in G'\\g\neq \id}}\frac{\tr(\rho_{x,\Omega_X^{[1]}}(g))-4}{\det(\rho_{x, T_{X}}(g)-\id)}=
-2\sum_{\substack{A\in H\\A\neq \id}}\frac{1}{2-\tr(A)}.
\label{sG'}
\end{equation}

\begin{ex}\label{scomputed}
We compute explicitly $s_x$ in the following cases.
\begin{itemize}
\item
$G_x=C_n$ is a cyclic group of order $n$. 

In this case, $G_x=G'$ and $H=\left<g_n\right>$,
with  $g_n=\diag(e^{\frac{2i\pi}{n}},e^{-\frac{2i\pi}{n}})$.
By (\ref{sG'}), we have:
\begin{equation}
\sum_{\substack{g\in C_n\\g\neq \id}}\frac{\tr(\rho_{x,\Omega_X^{[1]}}(g))-4}{\det(\rho_{x, T_{X}}(g)-\id)}=
-\sum_{k=1}^{n-1}\frac{1}{1-\cos(\frac{2k\pi}{n})}=-\frac{n^2-1}{6},
\label{scyclictr}
\end{equation}
where we used the identity
$$\sum_{k=1}^{n-1}\frac{1}{\sin^2(\frac{k\pi}{n})}=\frac{n^2-1}{3}.$$
As a result, 
\begin{equation}
 s_x(C_n)=-(n-1).
 \label{scyclic}
\end{equation}
\item
$G_x=\tilde{D}_n$ is a binary dihedral group of order $4n$.

In this case, $G_x=G'$ and $H=\tilde{D}_n$.
The binary dihedral group $\tilde{D}_n$ is generated by $B:=\begin{pmatrix} \xi & 0\\ 0 & \xi^{-1}\end{pmatrix}$ 
and $P=\begin{pmatrix} 0 & 1\\ -1 & 0 \end{pmatrix}$, with $\xi$ a $2n$-root of the unity.
The group $\tilde{D}_n$ can be partitioned into the disjoint union of the following two sets:
$$\left\{B,B^2,...,B^{2n}\right\}\ \text{and}\ \left\{BP,B^2P,...,B^{2n}P\right\}.$$
 Hence by (\ref{sG'}) and the fact that  $\tr(B^kP)=0$ for all $k\in\{1,...,2n\}$, we have that
$$
\sum_{\substack{g\in \tilde{D}_n\\g\neq \id}}\frac{\tr(\rho_{x,\Omega_X^{[1]}}(g))-4}{\det(\rho_{x, T_{X}}(g)-\id)}=
\left(\sum_{\substack{g\in C_{2n}\\g\neq \id}}\frac{\tr(\rho_{x,\Omega_X^{[1]}}(g))-4}{\det(\rho_{x, T_{X}}(g)-\id)}\right)-2n.
$$
Using (\ref{scyclictr}), we obtain
\begin{equation}
 \sum_{\substack{g\in \tilde{D}_n\\g\neq \id}}\frac{\tr(\rho_{x,\Omega_X^{[1]}}(g))-4}{\det(\rho_{x, T_{X}}(g)-\id)}=
 -\frac{4n^2+12n-1}{6}.
 \label{sdihedraltr}
\end{equation}
Therefore
\begin{equation}
 s_x(\tilde{D}_n)=-(n+2).
 \label{sdihedral}
\end{equation}
\end{itemize}
\end{ex}
\subsection{Orbifold Guan inequality}
In the smooth case, Guan \cite[Section 2]{Guan} has proved Theorem \ref{main} using two ingredients: the Hirzebruch--Riemann--Roch formula and the Verbitsky theorem (\cite[Theorem 1.5]{Verbitsky}). The orbifold extension of the former being explained in Section \ref{subsec:RRGB}, we give the generalization of the latter here.
\begin{prop}[{\cite[Proposition 5.16]{BakkerLehn}}]\label{sym}
Let $X$ be a primitively symplectic orbifold of dimension $2n$. Then the following map induced by the cup-product is injective for any $k\leq n$:
$$\Sym^k H^2(X,\C)\rightarrow H^{2k}(X,\C).$$
\end{prop}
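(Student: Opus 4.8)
The plan is to recover Verbitsky's injectivity theorem (\cite[Theorem 1.5]{Verbitsky}), the smooth input of Guan's proof, from two facts that persist in the orbifold setting: the Beauville--Bogomolov--Fujiki (BBF) quadratic form and the Fujiki relation. Write $V:=H^2(X,\C)$ and let $q$ be the BBF form on $V$; for primitively symplectic orbifolds (\cite{Fujiki}) $q$ is nondegenerate and there is a constant $c\neq 0$ with $\int_X\alpha^{2n}=c\,q(\alpha)^n$ for every $\alpha\in V$, the integral being the orbifold integration of Section~\ref{subsec:RRGB}. Since $X$ is a compact connected orbifold of complex dimension $2n$, its quotient singularities make it a rational homology manifold, so Poincar\'e duality holds and $H^{4n}(X,\C)\cong\C$.

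First I would polarize the Fujiki relation: by multilinearity, for $\beta_1,\dots,\beta_{2n}\in V$,
\begin{equation*}
\int_X \beta_1\cup\cdots\cup\beta_{2n}=c'\sum_{\text{matchings}}\ \prod_{\{i,j\}} q(\beta_i,\beta_j),\qquad c'\neq 0,
\end{equation*}
the sum running over all ways of pairing the $2n$ indices. Thus cupping $2n$ degree-two classes and integrating computes exactly the \emph{full $q$-contraction} bilinear pairing
\begin{equation*}
B\colon \Sym^k V\otimes \Sym^{2n-k}V\longrightarrow \C,
\end{equation*}
namely $B(T,S)$ is a nonzero multiple of $\Lambda^{n}(T\odot S)$, where $\Lambda\colon\Sym^{m}V\to\Sym^{m-2}V$ is contraction with $q$. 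Now suppose $P\in\Sym^k V$ maps to $0$ in $H^{2k}(X,\C)$. Then $\int_X \bar P\cup\bar Q=0$ for every $Q\in\Sym^{2n-k}V$, and by the displayed formula this means precisely $B(P,Q)=0$ for all such $Q$. Hence the proposition reduces to a statement of linear algebra: for $k\le n$, the pairing $B$ is nondegenerate in its first variable.

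To settle this algebraic point I would use the classical $\mathfrak{sl}_2$-action on the polynomial algebra $\Sym^\bullet V$ attached to $q$, with raising operator $L$ given by multiplication by the inverse form $q^\vee\in\Sym^2 V$ and lowering operator $\Lambda$. Since $q$ is nondegenerate, $q^\vee\neq 0$, so $L^{n-k}\colon \Sym^k V\to\Sym^{2n-k}V$ is injective (the polynomial ring is a domain). It then suffices to show that the symmetric form $b_k(T,T'):=B\big(T,L^{n-k}T'\big)$ on $\Sym^k V$ is nondegenerate, since $B(P,-)\equiv 0$ forces $b_k(P,-)\equiv 0$ and hence $P=0$. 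For $k=n$ one has that $b_n(T,T')$ is a nonzero multiple of $\Lambda^{n}(T\odot T')$, the middle full-contraction form, whose nondegeneracy is equivalent to that of $q$; for $k<n$ nondegeneracy follows from the $k=n$ case through the Lefschetz $\mathfrak{sl}_2$-decomposition of $\Sym^\bullet V$ under $(L,\Lambda)$. This is exactly the range $k\le n$ appearing in the statement. (Alternatively, one may package the same input into the Looijenga--Lunts--Verbitsky Lie algebra acting on $H^*(X,\C)$, using that hard Lefschetz holds for compact K\"ahler orbifolds.)

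The main obstacle is not this linear algebra but securing its two geometric inputs on the orbifold: a nondegenerate BBF form and a Fujiki relation with $c\neq 0$. These rest on the Hodge theory of V-manifolds and on Fujiki's foundational work; concretely one must verify that the orbifold fundamental class, Poincar\'e duality on $H^*(X,\C)$, and the pure weight-two Hodge structure on $H^2$ all behave as in the smooth case, which they do because quotient singularities yield rational homology manifolds carrying a polarizable Hodge structure. Once these are in hand, the argument above is insensitive to the presence of singularities.
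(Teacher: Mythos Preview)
Your approach is correct and is in fact the argument the paper has in mind. The paper does not give its own proof of this proposition; it simply cites Bakker--Lehn, and the subsequent Remark notes that for $n=2$ one can argue elementarily ``using the Fujiki relation and the fact that the Beauville--Bogomolov form is non-degenerate'' (referring to \cite[Section 3.4]{Lol}). What you have written is exactly this elementary route, carried out for general $n$: polarize the Fujiki relation to reduce injectivity of $\Sym^k H^2\to H^{2k}$ to the nondegeneracy (in the first slot) of the full $q$-contraction pairing $\Sym^kV\otimes\Sym^{2n-k}V\to\C$, and then settle that linear-algebra statement. This is also the skeleton of the original Bogomolov--Verbitsky proof in the smooth case; you are right that nothing in it is sensitive to quotient singularities once one has (i) Poincar\'e duality over $\Q$ (orbifolds are rational homology manifolds), (ii) a nondegenerate BBF form, and (iii) the Fujiki relation with nonzero constant. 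All three are available for primitively symplectic orbifolds, and the paper itself invokes the generalized Fujiki relation later (Lemma~\ref{genefuji}).

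One point deserves a firmer hand: your reduction to the nondegeneracy of $b_k(T,T')=\Lambda^n\big((q^\vee)^{n-k}\,T\,T'\big)$ on $\Sym^kV$ is fine, but the sentence ``for $k<n$ nondegeneracy follows from the $k=n$ case through the Lefschetz $\mathfrak{sl}_2$-decomposition'' is too quick as stated, since the $\mathfrak{sl}_2$ on $\Sym^\bullet V$ is centred at degree $\tfrac{1}{2}\dim V$, not at $n$. The clean way to finish is to use the commutation relation $[\Lambda,L]$ to show that $\Lambda^n\circ L^{n-k}$ restricted to $\Sym^{2k}V$ equals a \emph{nonzero} scalar multiple of $\Lambda^k$, so that $b_k(T,T')=c_{n,k}\,\Lambda^k(T\,T')$ with $c_{n,k}\neq 0$; the pairing $(T,T')\mapsto\Lambda^k(T\,T')$ on $\Sym^kV$ is then nondegenerate because $q$ is (decompose into harmonics, or check directly in an orthonormal basis). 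Alternatively, your parenthetical LLV/hard-Lefschetz remark gives a second, equally valid packaging. Either way the argument goes through, and it is the same argument the paper defers to the cited references.
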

\begin{rmk}
When $n=2$, we can also prove the previous proposition with an elementary method using the Fujiki relation and the fact that the Beauville--Bogomolov form is non-degenerate (see \cite[Section 3.4]{Lol}).
\end{rmk}
\begin{cor}\label{Guanineq}
Let $X$ be a primitively symplectic orbifold of dimension 4.
Then:
$$4h^{2,1}\leq -(h^{1,1})^2+15h^{1,1}+126+2s,$$
where $s$ is (the non-positive integer) defined in (\ref{s}).
In particular:
\begin{equation}
0\geq s\geq-91
\label{smax}
\end{equation}
\end{cor}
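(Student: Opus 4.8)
The plan is to combine the Verbitsky-type injectivity of Proposition~\ref{sym} with the orbifold Salamon relation of Proposition~\ref{RR}, reading off the Betti numbers from the Hodge diamond of Remark~\ref{b1}. From that diamond one has $b_2=h^{1,1}+2$ and $b_4=h^{2,2}+2h^{1,1}+2$. Applying Proposition~\ref{sym} with $n=k=2$, the cup-product map $\Sym^2 H^2(X,\C)\to H^4(X,\C)$ is injective, so that
\[
\frac{(h^{1,1}+2)(h^{1,1}+3)}{2}=\binom{b_2+1}{2}\leq b_4=h^{2,2}+2h^{1,1}+2.
\]

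Next I would eliminate $h^{2,2}$ using the Salamon relation \eqref{eqn:SalamonOrb} written as $h^{2,2}=64+s-2h^{2,1}+8h^{1,1}$. Substituting this into the displayed inequality, clearing the denominator, and collecting the quadratic, linear and constant terms in $h^{1,1}$ yields directly
\[
4h^{2,1}\leq -(h^{1,1})^2+15h^{1,1}+126+2s,
\]
which is the claimed inequality; this step is a routine expansion.

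For the ``in particular'' statement, the bound $s\leq 0$ is already recorded after \eqref{sx}, since $s=\sum_x s_x$ with each $s_x\leq -1$ (and $s=0$ precisely in the smooth case). For the lower bound I would substitute $h^{2,1}\geq 0$ into the main inequality to get $-2s\leq -(h^{1,1})^2+15h^{1,1}+126$. The point is that, although this estimate involves the unknown $h^{1,1}$, its right-hand side is a downward parabola in $h^{1,1}$ with maximal value $182.25$ attained at $h^{1,1}=15/2$; hence $-2s\leq 182.25$ irrespective of the actual value of $h^{1,1}$. Since $s$ is an integer by Proposition~\ref{RR}, this sharpens to $-2s\leq 182$, that is $s\geq -91$.

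The main obstacle is conceptual rather than computational: one must notice that the $h^{1,1}$-dependent estimate for $s$ can be made uniform by maximizing the quadratic, after which the integrality of $s$ upgrades the real bound $182.25$ to the clean value $182$. Everything else reduces to assembling the three ingredients---the Hodge diamond, Verbitsky injectivity, and the Salamon relation---and performing a straightforward algebraic rearrangement.
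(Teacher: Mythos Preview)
Your proof is correct and follows essentially the same approach as the paper: combine the Verbitsky injectivity (Proposition~\ref{sym}) with the orbifold Salamon relation (Proposition~\ref{RR}) via the Hodge diamond of Remark~\ref{b1}. Your derivation of $s\geq -91$ by maximizing the quadratic and invoking the integrality of $s$ is more explicit than the paper's terse ``we obtain our result''; an equally clean variant is to note that $h^{1,1}$ is itself an integer, so the maximum of $-(h^{1,1})^2+15h^{1,1}+126$ over integers is $182$, attained at $h^{1,1}\in\{7,8\}$.
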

\begin{proof}
Proposition \ref{sym} provides the following inequality:
$$b_4\geq \frac{(b_2+1)b_2}{2},$$
which can be rewritten as
$$4+4h^{1,1}+2h^{2,2}\geq (3+h^{1,1})(2+h^{1,1}).$$
Combining this inequality with Proposition \ref{RR}, we obtain our result.
\end{proof}

\subsection{Proof of the main results}
\begin{proof}[Proof of Theorem \ref{main}]
Thanks to Corollary \ref{Guanineq}, we have:
$$0\leq -(h^{1,1})^2+15h^{1,1}+126+2s.$$
Or equivalently, $$-2s\leq  (21-h^{1,1})(h^{1,1}+6).$$
Because of (\ref{sx}), $s\leq 0$ and when $X$ is singular $s<0$. Then Theorem \ref{main} follows.
\end{proof}

\begin{proof}[Proof of Theorem \ref{main2}]
Statement $(i)$ follows from (\ref{sx}) and (\ref{smax}). 
Let us prove $(ii)$.

By Proposition \ref{classification}, there exist a normal subgroup $G'$ of $G_x$ of index at most 2,  $H$ a finite subgroup of $\SU(2)$ and an automorphism $\theta$ of $H$
such that any element
$M\in G'$ has the form
$$M=\begin{pmatrix}
     A& 0\\
     0 & \theta(A)
    \end{pmatrix},$$
    for some $A\in H$.
We only need the well-known classification of the finite subgroups of $\SU(2)$ to have a full description of all possible $G_x$.
The finite subgroups of $\SU(2)$ are the so-called \emph{Kleinian groups} corresponding to the A-D-E Dynkin diagrams:  the cyclic groups $C_n$, the binary dihedral groups $\tilde{D}_n$ and the three sporadic groups
$E_6$, $E_7$ and $E_8$. The biggest sporadic group $E_8$ has order 120. Let us check the maximal size of the groups of A-D types.

When $G_x=G'$, we already have computed in Example \ref{scomputed} that $s_x(C_n)=-(n-1)$ and $s_x(\tilde{D}_n)=-(n+2)$.

Now, we assume that $G_x/G'$ has order 2.
By (\ref{trdet}), we have:
\begin{align*}
s_x(G_x)&=\frac{1}{|G_x|}\left[6\left(\sum_{\substack{g\in G'\\g\neq \id}}\frac{\tr(\rho_{x,\Omega_X^{[1]}}(g))-4}{\det(\rho_{x, T_{X}}(g)-\id)}
+\sum_{\substack{g\in G'}}\frac{\tr(\rho_{x,\Omega_X^{[1]}}(T_{n,k}g))-4}{\det(\rho_{x, T_{X}}(T_{n,k}g)-\id)}\right)+|G_x|-1\right]\\
&\leq \frac{1}{|G_x|}\left[6\left(\sum_{\substack{g\in G'\\g\neq \id}}\frac{\tr(\rho_{x,\Omega_X^{[1]}}(g))-4}{\det(\rho_{x, T_{X}}(g)-\id)}
-\frac{|G'|}{2}\right)+|G_x|-1\right].
\end{align*}
Denote by $C_n^{[2]}$ (resp. $\tilde{D}_n^{[2]}$) a finite subgroup of $\Sp(2)$ such that $C_n$ (resp. $\tilde{D}_n$) is a normal subgroup of index 2. 
We have then by (\ref{scyclictr}) and (\ref{sdihedraltr}):
\begin{equation}
s_x(C_n^{[2]})\leq -\frac{n+1}{2}\ \text{and}\ s_x(\tilde{D}_n^{[2]})\leq -\frac{n+4}{2}.
\label{sindex2}
\end{equation}
However by (\ref{smax}), we know that $s_x(G)\geq -91$.
Hence, the biggest possible groups are the groups which have a binary dihedral group $\tilde{D}_{178}$ as normal subgroup of index 2.
These groups have order $8\times 178=1424.$
\end{proof}

\begin{rmk}
Using (\ref{smax}), we can be more precise about the maximal cardinality of each kind of groups. 
\begin{itemize}
 \item 
 If $G_x=C_n$ is a cyclic group of order $n$, then by (\ref{scyclic}), $n\leq 92$.
 \item
 If $G_x=\tilde{D}_n$ is a binary dihedral group of order $4n$, then by (\ref{sdihedral}), $n\leq 89$.
 \item
 If $G_x=C_n^{[2]}$ is a group with a cyclic group of order $n$ as normal subgroup of index 2, then by (\ref{sindex2}),
 $n\leq 181$.
 \item
 If  $G_x=\tilde{D}_n^{[2]}$ is a group with a binary dihedral group of order $4n$ as normal subgroup of index 2, then by (\ref{sindex2}),
 $n\leq 178$.
\end{itemize}
\end{rmk}
\begin{rmk}
Using Corollary \ref{Guanineq}, we can get sharper constraints on singularities for each fixed second Betti number. For example, if $b_{2}=22$ (\emph{resp.} 21, 20, \emph{etc.}), then there are at most 13 (\emph{resp.} 25, 36, \emph{etc.}) singular points.
\end{rmk}

\section{Hitchin--Sawon formula}\label{sec:HSformula}
We can try to improve Theorem \ref{main} using the same method as in \cite[Section 3]{Guan}. The method of Guan is based on an equation of Hitchin--Sawon \cite{Hitchin}. This section is just an attempt and is not needed in the rest of the paper. First, we recall the following generalized Fujiki formula.
\begin{lemme}[{\cite[Lemma 4.6]{Lol}}]\label{genefuji}
Let $X$ be a primitively symplectic orbifold of dimension $2n$. If $\beta\in H^{4p}(X,\C)$ is of type $(2p,2p)$ on all small deformations of $X$, then there exists a constant $N(\beta)$ depending on $\beta$ such that for all $\alpha\in H^{2}(X,\C)$, one has $\int_X\beta\cdot\alpha^{2(n-p)}=N(\beta) \left(\int_X\alpha^{2n}\right)^{\frac{n-p}{n}}$.
\end{lemme}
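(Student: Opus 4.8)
The plan is to prove Lemma~\ref{genefuji} by reducing the orbifold case to an algebraic identity that already holds for smooth irreducible holomorphic symplectic manifolds, and then transporting it across via deformation theory. The key structural fact is that the orbifold version of the Beauville--Bogomolov--Fujiki quadratic form $q$ on $H^2(X,\C)$ exists and is non-degenerate, and that there is a Fujiki relation of the shape $\int_X \alpha^{2n}=c\, q(\alpha)^n$ for all $\alpha\in H^2(X,\C)$, where $c$ is a positive constant (the Fujiki constant). This is precisely the $p=0$ case and serves as the base case. For the general statement I want to show that the function
\[
\alpha\longmapsto \int_X \beta\cdot\alpha^{2(n-p)}
\]
is, up to a constant, a power of $q(\alpha)$, namely proportional to $q(\alpha)^{\,n-p}$.

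First I would fix a class $\beta\in H^{4p}(X,\C)$ that stays of type $(2p,2p)$ under all small deformations, and consider the polynomial function $P(\alpha):=\int_X\beta\cdot\alpha^{2(n-p)}$ on $H^2(X,\C)$; it is a homogeneous polynomial of degree $2(n-p)$. The heart of the argument is a monodromy/deformation input: as $X$ varies in its (local) deformation space, the period domain for $H^2$ is an open subset of the quadric $\{q=0\}$, and the hypothesis on $\beta$ guarantees that $P$ is well defined along the whole family and that its vanishing locus is monodromy-invariant. I would argue that $P$ must vanish on every $\alpha$ with $q(\alpha)=0$ lying in the relevant Hodge-theoretic locus: indeed for an isotropic class $\alpha$ of type $(1,1)$ on some deformation, the top self-intersection $\int_X\alpha^{2n}=c\,q(\alpha)^n$ vanishes, and a Hodge-index / degeneration argument forces $\int_X\beta\cdot\alpha^{2(n-p)}$ to vanish as well because $\beta$ remains a Hodge class there. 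Since the isotropic cone spans enough of $H^2$ by non-degeneracy of $q$, the Nullstellensatz then yields that the quadratic form $q$ divides $P$ as a polynomial.

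Having shown $q\mid P$, I would iterate: by homogeneity and the same vanishing argument applied repeatedly, $q^{\,n-p}$ divides $P$, and comparing degrees ($\deg P=2(n-p)=\deg q^{\,n-p}$) shows $P=N(\beta)\,q^{\,n-p}$ for a scalar $N(\beta)$ depending only on $\beta$. Finally I would re-express $q(\alpha)$ in terms of the top intersection using the base Fujiki relation, writing $q(\alpha)=\bigl(c^{-1}\int_X\alpha^{2n}\bigr)^{1/n}$, so that
\[
\int_X\beta\cdot\alpha^{2(n-p)}=N(\beta)\,q(\alpha)^{\,n-p}=N(\beta)\,c^{-(n-p)/n}\Bigl(\int_X\alpha^{2n}\Bigr)^{\frac{n-p}{n}},
\]
which, after absorbing the constant into $N(\beta)$, is exactly the claimed identity.

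The main obstacle I anticipate is the deformation-theoretic step showing that $P$ vanishes on the isotropic cone. This requires a workable local Torelli statement and a description of the period map for primitively symplectic orbifolds, so that the Hodge class $\beta$ and a $(1,1)$-isotropic class can be made to coexist on a single small deformation; in the smooth setting this rests on surjectivity of the period map and density of Hodge loci, and the delicate point is to ensure the analogous genericity holds in the orbifold category. I would lean on the cited orbifold Torelli-type results (as invoked for Proposition~\ref{sym}) and on the fact that quotient singularities deform the ambient Hodge structure in a controlled way, so that the correction terms from the singular points do not interfere with the purely cohomological identity being established; since the statement is quoted from \cite{Lol}, I expect the rigorous justification of this genericity to be the reference's principal contribution.
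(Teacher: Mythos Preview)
The paper does not give its own proof of this lemma: it is simply quoted from \cite{Lol} (Lemma~4.6 there) and used as a black box in the proof of Proposition~\ref{Hitchinorbi}. So there is no in-paper argument to compare your proposal against.

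That said, your outline follows the standard Fujiki-type strategy and is essentially correct in structure. One point deserves sharpening. At the key step you invoke an isotropic class $\alpha$ of type $(1,1)$ and appeal to a ``Hodge-index / degeneration argument'' to force $\int_X\beta\cdot\alpha^{2(n-p)}=0$; this is not how the vanishing is usually obtained, and for a $(1,1)$ class there is no direct Hodge-type obstruction. The clean argument takes instead $\alpha=\sigma$, the holomorphic symplectic form on some small deformation $X_t$, so that $\alpha$ is of type $(2,0)$ and $q(\alpha)=0$. Then $\alpha^{2(n-p)}$ has type $(2(n-p),0)$ while $\beta$ has type $(2p,2p)$ on $X_t$ by hypothesis, so $\beta\cdot\alpha^{2(n-p)}$ lands in $H^{2n,2p}(X_t)$, which meets $H^{4n}(X_t)=H^{2n,2n}(X_t)$ trivially for $p<n$; hence $P(\sigma)=0$. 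Local Torelli for primitively symplectic orbifolds (the same input behind Proposition~\ref{sym}, see \cite{Lol}, \cite{Bakker}) ensures that as $t$ varies the periods $[\sigma]$ are Zariski-dense in the quadric $\{q=0\}$, so $P$ vanishes on the whole isotropic cone. Since $q$ is an irreducible quadratic form, $q\mid P$; iterating (or, equivalently, using that $P$ is invariant under the orthogonal group of $q$ and hence a polynomial in $q$) gives $P=N(\beta)\,q^{\,n-p}$, and the base Fujiki relation converts this into the stated formula. With this correction your plan matches the argument carried out in \cite{Lol}.
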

\begin{prop}\label{Hitchinorbi}
Let $X$ be a primitively symplectic orbifold of dimension $2n$. Then:
$$\frac{((2n)!)^{n-1}N(c_2)^n}{(24n(2n-2)!)^n}=\int_{X}\sqrt{\hat{A}},$$
where $\hat{A}$ is the $\hat{A}$-\emph{genus} defined by $\prod_{i=1}^{2n}\left(\frac{\sqrt{a_i}/2}{\sinh \sqrt{a_i}/2}\right)$, where $a_{i}$'s are the Chern roots of the tangent bundle of $X$.
\end{prop}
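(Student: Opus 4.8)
The plan is to transplant the curvature computation of Hitchin and Sawon \cite{Hitchin} to the orbifold setting and then to convert its output into the claimed closed form by means of the generalized Fujiki relation (Lemma \ref{genefuji}). First I would invoke the orbifold version of Yau's theorem to endow $X$ with a Ricci-flat K\"ahler orbifold metric $g$, and let $[\omega]\in H^2(X,\R)$ be its K\"ahler class. On each local uniformizing system $(U,V,G,\pi)$ the pull-back $\pi^*g$ is an honest Ricci-flat K\"ahler metric on $V$, and because $X$ is primitively symplectic (Definition \ref{def:PSO}) the restricted holonomy is contained in $\Sp(n)$. The essential algebraic consequence, exactly as in the smooth case, is that the Riemann curvature tensor takes values pointwise in $\Sym^4$ of the standard $\Sp(n)$-representation, so that the Chern--Weil representatives of all the Pontryagin forms entering $\sqrt{\hat{A}}$ are governed by a single quadratic curvature invariant.

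The heart of the argument is the Hitchin--Sawon evaluation of $\int_X\sqrt{\hat{A}}$ in terms of this invariant. Their identities are, modulo exact forms, pointwise curvature identities on each chart where $\pi^*g$ is a genuine hyper-K\"ahler metric; assembling the chart contributions with Blache's orbifold integration (weight $\tfrac{1}{|G|}$ per chart) and using that exact forms integrate to zero on the closed orbifold, I expect to obtain, for the Ricci-flat class $[\omega]$, an identity of the shape
\begin{equation*}
\int_X\sqrt{\hat{A}}=C_n\cdot\frac{\left(\int_X c_2(X)\cdot[\omega]^{2n-2}\right)^n}{\left(\int_X[\omega]^{2n}\right)^{n-1}},
\end{equation*}
with $C_n$ a universal constant independent of $X$. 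Here $c_2(X)$ appears because, under the holonomy reduction, the quadratic curvature invariant is precisely the Chern--Weil representative of $c_2(X)$, while the denominator records the Riemannian volume through $[\omega]^{2n}$. No singular correction terms of the type in Theorem \ref{thm:RR} arise, since $\sqrt{\hat{A}}$ is integrated as a smooth orbifold form rather than through an equivariant holomorphic Lefschetz contribution.

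It then remains to recognise the right-hand side via Lemma \ref{genefuji}. Applying that lemma to $\beta=c_2(X)$, which is of type $(2,2)$ on all small deformations, gives $\int_X c_2(X)\cdot\alpha^{2n-2}=N(c_2)\left(\int_X\alpha^{2n}\right)^{(n-1)/n}$ for every K\"ahler class $\alpha$; raising to the $n$-th power shows the displayed ratio equals $N(c_2)^n$, independently of the choice $\alpha=[\omega]$. Substituting yields $\int_X\sqrt{\hat{A}}=C_n\,N(c_2)^n$. The constant $C_n$ is then fixed by bookkeeping the normalisations in $\hat{A}=\prod_i\frac{\sqrt{a_i}/2}{\sinh(\sqrt{a_i}/2)}$, most transparently cross-checked when $n=2$, where the top component of $\sqrt{\hat{A}}$ is $\tfrac{1}{5760}(7c_2^2-4c_4)$ and the target reduces to $\int_X(7c_2^2-4c_4)=15\,N(c_2)^2$; matching forces $C_n=\frac{((2n)!)^{n-1}}{(24n(2n-2)!)^n}$.

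The main obstacle is the second step: one must verify that the Hitchin--Sawon curvature identities, which rest on the special algebraic form of the Riemann tensor of a hyper-K\"ahler metric, remain valid chart-by-chart on the orbifold, and that summing the chart contributions with the orbifold weights reproduces the global orbifold characteristic number $\int_X\sqrt{\hat{A}}$ with no contribution coming from the codimension $\geq 4$ singular locus. Once this local-to-global passage is secured, the purely combinatorial determination of $C_n$ is routine.
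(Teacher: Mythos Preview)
Your approach is correct and follows essentially the same route as the paper's proof: both transplant the Hitchin--Sawon curvature computation to the orbifold setting by observing that the relevant identities are local on uniformizing charts, and then invoke the generalized Fujiki relation (Lemma \ref{genefuji}) to express the resulting ratio of characteristic numbers as $N(c_2)^n$. The paper phrases the local-to-global step in the language of Rozansky--Witten invariants---constructing the class $[\Phi]\in H^1(X,\Sym^3\Omega_X^1)$ from the curvature on each chart, then quoting the graph-theoretic identity $b_{\Theta^n}(X)=48^n n!\int_X\sqrt{\hat{A}}$ from \cite{Hitchin}---whereas you summarize it more geometrically via the holonomy reduction and the special algebraic form of the Riemann tensor; these are two presentations of the same mechanism. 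One minor point: the paper does not explicitly invoke an orbifold Yau theorem, but the construction of $[\Phi]$ (or equivalently your ``curvature in $\Sym^4$'') implicitly requires a hyper-K\"ahler metric on charts, so your appeal to it is appropriate and arguably clarifies a step the paper leaves implicit. Your determination of $C_n$ by tracking normalisations with an $n=2$ cross-check is also legitimate, since the smooth Hitchin--Sawon formula is already available for the sanity check; the paper instead carries the constants through equations (\ref{hitch8})--(\ref{GuanHitch}) explicitly.
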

\begin{proof}
We adapt Hitchin--Sawon's proof \cite{Hitchin}. We will note that quotient singularities do not have any effect on Hitchin--Sawon formula.

We can consider $N(c_2)$ as defined in Lemma \ref{genefuji}. 
In the smooth case, the equation of Hitchin--Sawon provides an expression of $N(c_2)$ in terms of Pontryagin classes. The main tool used by Hitchin and Sawon are Rozansky--Witten invariants (see \cite[Section 2]{Hitchin}). 
These invariants are constructed from the curvature of the manifold and a trivalent graph with $2n$ vertices.

The tangent sheaf $T_X$ on $X$ can be defined as the unique reflexive sheaf such that $T_{X|X_{\reg}}$
is the usual holomorphic tangent sheaf on $X_{\reg}$. It is a locally V-free sheaf. 
We consider $g$ a K\"ahler metric on $T_X$.
As explained in Definition \ref{metric}, this provides a metric $g$ on $T_{X_{\reg}}$ such that for all local uniformizing system
$(U,V,G,\pi)$, $\pi^*(g_{|U_{\reg}})$ extends to a metric $g_V$ on $T_V$. 
Then the Riemannian curvature $K$ of $(X,g)$ is obtained on $X_{\reg}$ by the Riemannian curvature of $(X_{\reg},g)$ and on all local 
uniformizing systems by the Riemann curvature of $(V,g_V)$.
For the same reason as in the smooth case, we can associated to the curvature a class $[\Phi]\in H^1(X,\Sym^3 \Omega_X^1)$ (see \cite[Section 2]{Hitchin} or \cite{Witten}). 
From this class $[\Phi]$, the definition of the Rozansky--Witten invariants being purely algebraic, it can be generalized, word by word, to the case of primitively symplectic orbifold. 
We denote these invariants $b_{\Gamma}(X)$ for $\Gamma$ a trivalent graph with $2n$ vertices.

In the smooth case, it is well known that:
\begin{equation}
2c_2-c_1^2=\left[\frac{\tr K^2}{(2\pi)^2}\right],
\label{K2}
\end{equation}
where $K$ is the curvature of $X$. Because of our definition of Chern classes in Section \ref{reminders}, (\ref{K2}) is also true in the orbifold case.
In the symplectic case (\ref{K2}) gives:
$$2c_2=\left[\frac{\tr K^2}{(2\pi)^2}\right].$$
Then, using this expression for $c_2$ exactly as Hitchin and Sawon did in \cite[Section 3]{Hitchin}, we can provide an expression of $N(c_2)$ (\cite[equations (7) and (8)]{Hitchin}):
\begin{equation}
c_{\Theta}\int_X\omega^n\cdot\overline{\omega}^n=16\pi^2n\int_Xc_2\cdot\omega^{n-1}\cdot\overline{\omega}^{n-1},
\label{hitch8}
\end{equation}
where $\omega$ generated $H^{2,0}(X)$ and $c_{\Theta}$ can be express by:
\begin{equation}
b_{\Theta^n}(X)=\frac{n!}{(2\pi^2)^n}c_{\Theta}^n\vol(X),
\label{Hitch7}
\end{equation}
with $\vol(X)=\frac{\int_X\left(\omega+\overline{\omega}\right)^{2n}}{2^{2n}(2n)!}$ and $\Theta$ the trivalent graph with two vertices.
Equation (\ref{hitch8}) can be rewritten:
$$\frac{c_{\Theta}\int_X\left(\omega+\overline{\omega}\right)^{2n}}{{2n\choose n}}=\frac{16\pi^2n\int_Xc_2\cdot(\omega+\overline{\omega})^{n-1}}{{2(n-1)\choose n-1}}.$$
That is:
$$c_{\Theta}=\frac{32\pi^2(2n-1)N(c_2)}{\left[\int_X(\omega+\overline{\omega})^{2n}\right]^{1/n}}.$$
Then, with (\ref{Hitch7}), we obtain:
\begin{equation}
b_{\Theta^n}(X)=\frac{n!4^n(2n-1)^nN(c_2)^n}{(2n)!}.
\label{GuanHitch}
\end{equation}

In general, we can write:
$$s_{2m}=\left[\frac{\tr(K^{2m})}{(2\pi i)^{2m}}\right],$$
where $$\ch(T_X)=\sum_m\frac{s_{2m}}{(2m)!}.$$
Using these expressions and important results on graphs (see \cite[Section 5]{Hitchin}), Hitchin and Sawon provide an expression of $b_{\Theta^n}$ in terms of the Pontryagin classes. The results on graph are not affected by having singularities on $X$, hence, the same expression can be obtained in the symplectic orbifold case:
$$b_{\Theta^n}(X)=48^nn!\int_{X}\sqrt{\hat{A}}.$$
Combined with (\ref{GuanHitch}) this equation provides our proposition.
\end{proof}
\begin{lemme}\label{Guanorbi}
Let $X$ be a primitively symplectic orbifold of dimension 4, then:
$$3b_2N(c_2)^2\leq(b_2+2)c_2^2.$$
\end{lemme}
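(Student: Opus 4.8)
The plan is to exploit the two structural inputs already available, namely the generalized Fujiki formula (Lemma \ref{genefuji}) and the injectivity $\Sym^2 H^2(X,\C)\hookrightarrow H^4(X,\C)$ (Proposition \ref{sym}), together with the Hodge--Riemann bilinear relations on the compact K\"ahler orbifold $X$. Throughout I write $c_2:=c_2(X)$ and $c_2^2:=\int_X c_2^2$, and I let $q$ denote the Beauville--Bogomolov--Fujiki form on $V:=H^2(X,\C)$, normalized by the Fujiki relation $\int_X\alpha^4=c_X\,q(\alpha)^2$ with $c_X>0$.

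First I would translate the generalized Fujiki formula into a usable shape. Applying Lemma \ref{genefuji} with $n=2$, $p=1$ and $\beta=c_2$ (which is of type $(2,2)$ on all small deformations, being a characteristic class) gives $\int_X c_2\,\alpha^2=N(c_2)\sqrt{\int_X\alpha^4}$ on the positive cone $\{q>0\}$; since both sides are polynomial in $\alpha$ there, I extend the identity to all of $V$ to obtain an exact proportionality $\int_X c_2\,\alpha^2=\mu\,q(\alpha)$ with $\mu:=N(c_2)\sqrt{c_X}$, so that $N(c_2)^2=\mu^2/c_X$. Polarizing yields $\int_X c_2\,\alpha\beta=\mu\,q(\alpha,\beta)$ and $\int_X\alpha\beta\gamma\delta=\tfrac{c_X}{3}\bigl(q(\alpha,\beta)q(\gamma,\delta)+q(\alpha,\gamma)q(\beta,\delta)+q(\alpha,\delta)q(\beta,\gamma)\bigr)$.

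Next I would isolate the contribution of $\Sym^2 H^2$ to $c_2^2$. Let $g\in\Sym^2 V$ be the real class representing $q$ (in an orthonormal basis $\{e_i\}$ for $q$ one has $g=\sum_i e_i^2$), set $a:=3\mu/(c_X(b_2+2))$ and $w:=c_2-a\,g$, and write $\langle\,\cdot\,,\cdot\,\rangle$ for the cup-product pairing $\int_X(\,\cdot\,)\cup(\,\cdot\,)$ on $H^4(X)$. A direct computation with the two polarized formulas gives $\langle g,g\rangle=\tfrac{c_X}{3}b_2(b_2+2)$ and $\langle c_2,e_ie_j\rangle=\mu\,\delta_{ij}=\langle a\,g,e_ie_j\rangle$, so that $\langle w,e_ie_j\rangle=0$; since the $e_ie_j$ span the image of $\Sym^2 H^2$ in $H^4(X)$, the class $w$ is Poincar\'e-orthogonal to that image. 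As $g$ lies in $\Sym^2 H^2$, the cross term vanishes and $c_2^2=a^2\langle g,g\rangle+\langle w,w\rangle=\tfrac{3\mu^2 b_2}{c_X(b_2+2)}+\langle w,w\rangle$. Substituting $N(c_2)^2=\mu^2/c_X$, the claimed inequality $3b_2N(c_2)^2\leq(b_2+2)\,c_2^2$ becomes exactly $(b_2+2)\langle w,w\rangle\geq 0$, i.e.\ $\langle w,w\rangle\geq 0$.

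The final and main step is to prove this positivity, and this is where I expect the real work to lie. The class $w$ is real and of type $(2,2)$ (as $c_2$ and $g$ are), and by construction it is Poincar\'e-orthogonal to the whole image of $\Sym^2 H^2$, which contains $\omega\cdot H^{1,1}$, hence $\omega\cdot H^{1,1}_{\mathrm{prim}}$ and $\omega^2$, for a K\"ahler class $\omega$. Using the hard Lefschetz decomposition $H^{2,2}(X,\R)=H^{2,2}_{\mathrm{prim}}\oplus\omega H^{1,1}_{\mathrm{prim}}\oplus\C\,\omega^2$, whose three summands are mutually orthogonal for the cup-product pairing, the orthogonality of $w$ to $\omega H^{1,1}_{\mathrm{prim}}$ and to $\omega^2$ forces the components of $w$ in those (nondegenerate) summands to vanish, so that $w\in H^{2,2}_{\mathrm{prim}}$. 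The Hodge--Riemann bilinear relations on the compact K\"ahler orbifold $X$ (valid by the orbifold Hodge theory recalled above) then give $\langle w,w\rangle=\int_X w^2\geq 0$ for a real primitive class of type $(2,2)$ in the middle degree, completing the proof. The principal obstacle is thus to ensure that the Lefschetz decomposition and the Hodge--Riemann positivity hold in the orbifold setting and to match the signs correctly; everything else is a routine computation with the polarized Fujiki identities.
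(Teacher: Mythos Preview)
Your argument is correct and follows essentially the same route as the paper, which simply points to Guan's original proof via the generalized Fujiki relation (Lemma~\ref{genefuji}) and the Hodge--Riemann bilinear relations (available for orbifolds by \cite[Proposition~2.14]{Lol}). You have spelled out precisely those steps: project $c_2$ onto the image of $\Sym^2 H^2$ (where it lands on a multiple of the class $g$ dual to $q$), compute the contribution of that projection, and use Hodge--Riemann to show the residual primitive $(2,2)$-class $w$ has $\int_X w^2\ge 0$.
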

\begin{proof}
The proof of \cite[Lemma 3]{Guan} can be adapted in the case of primitively symplectic orbifolds. 
Indeed, it is a consequence of Lemma \ref{genefuji} and the Hodge--Riemann bilinear relation which have been generalized in \cite[Proposition 2.14]{Lol}. 
\end{proof}
\begin{cor}\label{hichinsawonineq}
Let $X$ be a primitively symplectic orbifold of dimension 4, then:
$$(b_2+1)b_3\leq 4b_2^2+2(S_1+20S_0-62)b_2+736+2(S_1-124S_0),$$
where $S_0$ and $S_1$ are defined in \eqref{eqn:Si}, introduced in Section \ref{secRR}.
\end{cor}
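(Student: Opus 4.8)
The plan is to combine the Hitchin--Sawon expression for $N(c_2)^2$ from Proposition \ref{Hitchinorbi} with the inequality of Lemma \ref{Guanorbi}, and then translate everything back into Hodge/Betti data using the orbifold Salamon relation (Proposition \ref{RR}) together with the Riemann--Roch equations \eqref{0}--\eqref{caraequa}. First I would specialize Proposition \ref{Hitchinorbi} to $n=2$, which relates $N(c_2)^2$ to $\int_X\sqrt{\hat A}$. In dimension four, $\sqrt{\hat A}$ expands to a degree-$4$ characteristic class which is a universal combination of $c_2^2$ and $\int_X c_4(X)$ (equivalently of $p_1^2$ and $p_2$, using $\omega_X\simeq\mathcal O_X$ so that $c_1=0$). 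Carrying out this expansion gives $\int_X\sqrt{\hat A}$ as an explicit linear combination of $\int_X c_2^2$ and $\int_X c_4(X)$, and hence, via Proposition \ref{Hitchinorbi}, an explicit formula of the shape $N(c_2)^2 = \alpha\,\int_X c_2^2 + \beta\,\int_X c_4(X)$ for rational constants $\alpha,\beta$.

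Next I would feed this into Lemma \ref{Guanorbi}, namely $3b_2 N(c_2)^2\leq (b_2+2)\,c_2^2$, to obtain an inequality purely among $\int_X c_2^2$, $\int_X c_4(X)$ and $b_2$. The key observation is that $\int_X c_4(X)$ can be eliminated in favor of Hodge numbers and singularity contributions: from the Gauss--Bonnet relation \eqref{caraequa} one reads $\int_X c_4(X)=\chi_{\mathrm{top}}(X)-\eta = b_4+2b_3+2b_2+2-\eta$, while $\int_X c_2^2$ is extracted by combining the Riemann--Roch identities \eqref{0}, \eqref{1} (or \eqref{2}) with the Todd-class formula $\td_4=\frac{1}{720}(3c_2^2-c_4)$ valid when $c_1=0$. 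Solving the resulting small linear system expresses both $\int_X c_2^2$ and $\int_X c_4(X)$ through $h^{1,1}$, $h^{2,1}$, $h^{2,2}$ and the singularity sums $S_0,S_1,\eta$.

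I would then substitute these expressions, replacing $h^{2,2}$ and $\eta$ using the orbifold Salamon relation \eqref{eqn:SalamonOrb} and its corollary $\eta=\sum_i(-1)^i S_i$, so that the whole inequality is written in the target variables $b_2=2+h^{1,1}$, $b_3=2h^{2,1}$, $S_0$ and $S_1$. After clearing the common factor, the inequality $3b_2 N(c_2)^2\leq (b_2+2)c_2^2$ should rearrange precisely into the claimed bound
$$(b_2+1)b_3\leq 4b_2^2+2(S_1+20S_0-62)b_2+736+2(S_1-124S_0).$$
The main obstacle I anticipate is bookkeeping rather than conceptual: one must get the rational constants in the $\sqrt{\hat A}$-expansion and in the Todd class exactly right, and track the several linear substitutions without sign or factor errors, since the final coefficients ($-62$, $736$, $-124$, etc.) are sensitive to every intermediate normalization. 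A useful sanity check is to verify the inequality on the known smooth model, the Hilbert square of a K3 surface (where $S_0=S_1=0$, $b_2=23$, $b_3=0$), which should make both sides consistent and thereby confirm the constants.
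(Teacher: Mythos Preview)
Your proposal is correct and follows essentially the same strategy as the paper: specialize the Hitchin--Sawon identity to $n=2$, feed it into Lemma~\ref{Guanorbi}, and eliminate the characteristic numbers via the orbifold Riemann--Roch relations. The paper's bookkeeping is slightly leaner than yours: rather than passing through Gauss--Bonnet \eqref{caraequa} and the Salamon relation (which introduces $\eta$ and $h^{2,2}$ only to eliminate them again), it keeps $\int_X\td_4$ as an intermediate variable, rewrites $c_2^2=(720\,\td_4+c_4)/3$, uses \eqref{1} directly to replace $c_4$ in terms of $b_2,b_3,S_1,\td_4$, and only at the last step substitutes $\td_4=3-S_0$ from \eqref{0}. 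One small slip in your write-up: $\chi_{\mathrm{top}}(X)=2+2b_2-2b_3+b_4$, not $2+2b_2+2b_3+b_4$.
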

\begin{proof}
In our case, Proposition \ref{Hitchinorbi} provides:
$$
\frac{4!N(c_2)^2}{(24\times2\times2)^2}=\frac{1}{2}\td_4-\frac{c_2^2}{8\times 12^2}.
$$
That is:
$$N(c_2)^2=192\td_4-\frac{c_2^2}{3}.$$
So, using Lemma \ref{Guanorbi}:
$$
576b_2\td_4\leq2(b_2+1)c_2^2.
$$
In Section \ref{secRR}, we found expressions for $c_4$ and $\td_4$, so it is more convenient to replace $c_2^2$ by  $\frac{720\td_4+c_4}{3}$:
$$576b_2\td_4\leq2(b_2+1)\frac{720\td_4+c_4}{3}.$$
Then (\ref{1}) provides:
$$576b_2\td_4\leq2(b_2+1)(248\td_4+4b_2-b_3-8+2S_1).$$
This is:
$$(b_2+1)b_3\leq4b_2^2+(2S_1-40\td_4-4)b_2+248\td_4-8+2S_1.$$
Using (\ref{0}) to replace $\td_4$ by $3-S_0$, we obtain our result.
\end{proof}
\begin{ex}
We can apply Corollary \ref{hichinsawonineq} to orbifolds with singularities $\C^4/\pm\id$. 
It provides:
$$(b_2+1)b_3\leq4b_2^2+(N-124)b_2+736-8N,$$
where $N$ is the number of singular points.
\begin{itemize}
\item If $N=28$:
$$(b_2+1)b_3\leq 4(16-b_2)(8-b_2).$$
\item If $N=36$:
$$(b_2+1)b_3\leq 4(14-b_2)(8-b_2).$$
\end{itemize}
This corresponds exactly to the second Betti numbers of examples in \cite[Section 13]{Fujiki} (see also Section \ref{summ}).
\end{ex}
\begin{rmk}
Unfortunately, Corollary \ref{hichinsawonineq} is not restrictive enough to exclude more second Betti numbers. For instance, a primitively symplectic orbifold with second Betti number 22 and 3 isolated singularities of analytic type $(\C^4/g_3, 0)$ with $g_3=\diag(e^{\frac{2i\pi}{3}},e^{\frac{2i\pi}{3}},e^{-\frac{2i\pi}{3}},e^{-\frac{2i\pi}{3}})$ is not in contradiction with Corollary \ref{hichinsawonineq} and Proposition \ref{Guanineq}. 
To improve Theorem \ref{main}, we need some techniques to exclude some configurations of singularities.
\end{rmk}

\section{Examples of primitively symplectic orbifolds of dimension 4}\label{sec:Examples}
For each Betti number between 3 and 23, we provide an example of primitively symplectic orbifold when we know one. See \cite[Section 13]{Fujiki} for more examples; many additional examples could also be obtained by considering partial resolution in codimension 2 of quotients of $K3^{[2]}$-type and $\operatorname{Kum}_{2}$-type manifolds. We summarize all the numerical results in a table in Section \ref{summ}.

In this section, an  \emph{isolated cyclic quotient singularity of order $n$}  refers to the germ $(\C^k/C_n, 0)$, where  $C_n$ is an order-$n$ cyclic subgroup of $\Sp(k,\C)$ such that the origin is the only fixed point.

\subsection{A construction of Fujiki}
Most of the know examples of primitively symplectic orbifolds of dimension 4 was constructed by Fujiki in \cite[Section 13]{Fujiki}. 

We recall his construction. Let $H$ be a finite group of symplectic automorphisms on a K3 or an abelian surface $S$. First, we assume that $H$ is abelian. Let $\theta$ be an involution on $H$. The action of $H$ on $S\times S$ is given by $h\cdot (s,t)=(hs,\theta(h)t)$ for all $(h,s,t)\in H\times S \times S$.
Moreover, we define $G$ to be the group of automorphisms of $S\times S$ generated by $H$ and the involution $(s,t)\mapsto (t,s)$. The quotient $(S\times S)/G$ has isolated singularities as well as singularities in codimension 2. The singularities in codimension 2 can be resolved crepantly (see 
\cite{Birkar}) and we denote by $Y_{K3}(H)$ (resp. $Y_{T}(H)$) the primitively symplectic orbifold obtained when $S$ is a K3 surface (resp.~when $S$ is a complex torus of dimension 2). As explained in \cite[Section 13]{Fujiki}, the deformation class of $Y_{K3}(H)$ (resp. $Y_{T}(H)$) only depends on $H$.

When the group $H$ is non abelian, the situation is more complicated (the deformation class does not only depends on $H$) and Fujiki only provides 5 additional examples.

\subsection{$b_2=3$}
Mongardi \cite[Example 4.5.1 and Example 4.5.2]{MongT} constructed two manifolds $(X_1,\sigma_1)$ and $(X_2,\sigma_2)$ of $K3^{[2]}$-type endowed with symplectic automorphisms of order 11 such that
$$H^{2}(X_1,\Z)^{\sigma_1}\simeq \begin{pmatrix} 6 & 2 & 2 \\ 2 & 8 & -3\\ 2 & -3 & 8\end{pmatrix}\ \text{and}\  H^{2}(X_2,\Z)^{\sigma_2}\simeq\begin{pmatrix} 2 & 1 & 0 \\ 1 & 6 & 0\\ 0 & 0 & 22\end{pmatrix}.$$ 
We denote the quotients $M_{11}^i=X_i/\sigma_i$ with $i\in\left\{1,2\right\}$. The primitively symplectic orbifolds $M_{11}^1$ and $M_{11}^2$ both have second Betti number equal to 3 and have 5 isolated cyclic quotient singularities of order 11. Moreover their Beauville--Bogomolov forms were computed in \cite[Theorem 1.2]{Lol2}. The pair $(X_1, \sigma_1)$ was independently discovered in \cite{FuGlasgow}.

In general, the fourth Betti number of the quotient of a manifold of $K3^{[2]}$-type by an automorphism of prime order $p\neq 2,5$ can be computed using the Boissi\`ere--Nieper-Wisskirchen--Sarti invariants and the fact that:
$$\frac{H^4(X_i,\Z)}{\Sym^2 H^2(X_i,\Z)}=\left(\Z/2\Z\right)^{23}\oplus (\Z/5\Z).$$
See \cite[Section 2, Proposition 5.1, Proposition 6.6 and Lemma 6.14]{SmithTh} for more details. We obtain $b_4(M_{11}^i)=26$ for all $i\in\left\{1,2\right\}$.
\subsection{$b_2=5$}
Let $X$ be a manifold of $K3^{[2]}$-type endowed with a symplectic automorphism $\sigma$ of order 7. As  explained in \cite[Section 7.3]{MongT}, we always have $\rk H^2(X,\Z)^\sigma=5$ and $(X,\sigma)$ is standard (i.e.~deformation equivalent to a natural pair $(S^{[2]},\sigma^{[2]})$, where $S$ is a K3 surface and $\sigma^{[2]}$ induced by an automorphism $\sigma$ on $S$). We denote $M_7:=X/\sigma$, which is a primitively symplectic orbifold with second Betti number 5, and has 9 isolated cyclic quotient singularities of order 7. Moreover, its Beauville--Bogomolov form have been computed in \cite[Theorem 1.3]{Lol3}. 

In general, the Betti numbers of the quotient of a Hilbert scheme $S^{[m]}$ of $m$ points on a K3 surface $S$ by a natural automorphism of prime order can be computed using the Boissi\`ere--Nieper-Wisskirchen--Sarti invariants and the Qin--Wang integral basis of $H^*(S^{[m]},\Z)$ (\cite[Theorem 1.1 and Remark 5.6]{Wang}).
See \cite[Proof of Corollary 5.2]{Lol3} for more details. We can compute $b_4(M_7)=42$.
\subsection{$b_2=6$}\label{6}
We consider the complex torus $T=\C^2/\Lambda$, where $\Lambda=\left\langle (1,0),(i,0),(0,1),(0,i)\right\rangle$. The torus is given by the product of elliptic curves $T=E\times E$, with $E:=\C/\left\langle 1,i\right\rangle$.
Let $\sigma_4$ be the symplectic automorphism of order 4 on $T$ defined by the action on $\C^2$ given by the matrix:
$$\sigma_4=\begin{pmatrix} 0 & -1\\ 1 & 0\end{pmatrix}.$$  
We remark that 
\begin{equation}
H^2(T,\Z)^\nu=U\oplus \langle-2\rangle^{\oplus 2}.
\label{T4}
\end{equation}
Since $\sigma_4$ is a linear automorphism on $T$, it extends to an automorphism $\sigma_4^{[2]}$ on $K_2(T)$.
We want to consider a crepent resolution in codimension 2 of $K_2(T)/\sigma_4^{[2]}$. It can be obtain as follows.
We have $\sigma_4^2=-\id$. As explained in \cite[Section 1.2.1]{Tari}, the induced automorphism $(-\id)^{[2]}$ on $K_2(T)$ has 36 isolated fixed points and a fixed surface
$$\Sigma=\overline{\left\{\left.\xi\in K_2(T)\right|\ \Supp \xi=\left\{0,x,-x\right\}, x\in T\smallsetminus \left\{0\right\}\right\}}.$$
We can consider $r:\widetilde{K_2(T)}\rightarrow K_2(T)$ the blow-up of $K_2(T)$ in $\Sigma$. 
Then $\sigma_4^{[2]}$ extends to an automorphism $\widetilde{\sigma}_4^{[2]}$ on $\widetilde{K_2(T)}$ and $K_4':=\widetilde{K_2(T)}/\widetilde{\sigma}_4^{[2]}$ is a primitively symplectic orbifold, which is a crepent resolution in codimension 2 of $K_2(T)/\sigma_4^{[2]}$.
From (\ref{T4}), we deduce that:
\begin{equation} 
b_2(K_4')=4+1+1=6. 
\label{b2k4}
\end{equation}
Moreover, we can also describe $K_4'$ as a quotient of $K'$ (constructed in Section \ref{8}). The orbifold $K'$ is given by $\widetilde{K_2(T)}/(-\id)^{[2]}$. Then $\widetilde{\sigma}_4^{[2]}$ induced an involution on $K'$ that we denote by $\overline{\sigma}_4^{[2]}$ and $K_4'=K'/\overline{\sigma}_4^{[2]}$. By \cite[Proposition 8.23]{Lol4} $b_3(K')=0$, it follows that:
\begin{equation} 
b_3(K_4')=0.
\label{b3k4}
\end{equation}
Now, we are going to determine the singularities of $K_4'$, which are all isolated cyclic quotient singularities of order 2 or 4. Let us denote the number of such singular points by $a_2$ and $a_4$ respectively. It turns out to be quite technical to determine $a_2$ directly. However, order-4 cyclic quotient singular points  correspond to the singularities of $K'$ which are fixed by $\overline{\sigma}_4^{[2]}$, hence they correspond to the fixed points of $\sigma_4^{[2]}$ outside of $\Sigma$. After determining $a_4$, we will deduce $a_2$ with a numerical method. We have:
$$\Fix \sigma_4=\left\{\left.(a,a)\right|\ a\in E[2]\right\}.$$
That is, we have 4 isolated fixed points $(0,0)$, $(\frac{1}{2},\frac{1}{2})$, $(\frac{i}{2},\frac{i}{2})$ and $(\frac{1+i}{2},\frac{1+i}{2})$.
Hence $\sigma_4^{[2]}$ fixes only one point outside of the diagonal of $K_2(T)$ which is the scheme supported on $\left\{(\frac{1}{2},\frac{1}{2}), (\frac{i}{2},\frac{i}{2}), (\frac{1+i}{2},\frac{1+i}{2})\right\}$. This point is also outside of $\Sigma$.
Furthermore, $\sigma_4^{[2]}$ acts on 
$$Z_0:=\left\{\left.\xi\in K_2(T)\right|\ \Supp \xi=\left\{(0,0)\right\}\right\}.$$ 
As explained in \cite[Section 1.2.1]{Tari}, the fixed locus of $(\sigma_4^{[2]})^2=(-\id)^{[2]}$ on $Z_0$ is given by the vertex $x$ and a line $\ell$ which is contained in $\Sigma$; the vertex $x$ is out of $\Sigma$. Necessarily, the vertex $x$ will also be fixed by $\sigma_4^{[2]}$. 
We conclude that $\sigma_4^{[2]}$ only fixes 2 points outside of $\Sigma$ and so:
\begin{equation}
a_4=2.
\label{N3}
\end{equation}
Now, we are going to deduce the number $a_2$  by considering the double cover:
$$\pi: K'\rightarrow K'/\overline{\sigma}_4^{[2]}=K_4'.$$
Since $\pi$ has only isolated ramification points, the Hurwitz formula can be used in this framework and provides:
\begin{equation}
\chi(K')=2\chi(K_4')-R,
\label{Hurwitz}
\end{equation}
where $R$ is the number of ramification points.
An order-2 cyclic quotient singular point  in $K_4'$ can arise in two different ways. First, it can be the image by $\pi$ of a non-ramified singular point of $K'$,  or second, it can be the image by $\pi$ of a ramified smooth point of $K'$. Since the $\pi$-ramified singular points of $K'$ provide cyclic quotient singularities of order 4 in $K_4'$, we obtain the following formula:
$$a_2=\frac{\#\Sing(K')-a_{4}}{2}+R-a_4.$$
We have seen that $K'$ has 36 isolated singularities, hence by (\ref{N3}):
\begin{equation}
a_2=15+R.
\label{N1}
\end{equation}
By \cite[Proposition 8.23]{Lol4}, $\chi(K')=108$. Hence (\ref{Hurwitz}), (\ref{b3k4}) and Proposition \ref{RR} provide:
$$108=2(48+12b_2(K_4')+s)-R.$$
Applying (\ref{N1}), (\ref{b2k4}) and (\ref{scyclic}), we obtain:
$$108=2(48+12\times6-3\times 2-a_2)-a_2+15.$$
One deduce that  $a_2=45$.
\subsection{$b_2=7$}
The generalized Kummer fourfold \cite{Beauville}.

For the sake of completeness, we also provide a singular example with $b_2=7$.
We consider the same complex torus as previously $T=\C^2/\Lambda$, where $\Lambda=\left\langle (1,0),(i,0),(0,1),(0,i)\right\rangle$. The torus is given by the product of elliptic curves $T=E\times E$, with $E:=\C/\left\langle 1,i\right\rangle$.
Let $\nu$ be the symplectic automorphism of order 3 on $T$ defined by the action on $\C^2$ given by the matrix:
$$\nu=\begin{pmatrix} 0 & -1\\ 1 & -1\end{pmatrix}.$$  
We remark that: 
\begin{equation}
H^2(T,\Z)^\nu=U\oplus A_2.
\label{b27}
\end{equation}
Since $\nu$ is a linear automorphism on $T$, it extends to an automorphism on $K_2(T)$.
Furthermore, $\nu$ verifies the following relation: 
\begin{equation}
\id+\nu+\nu^2=0.
\label{eta}
\end{equation}
We denote $\nu^{[2]}$ the automorphism induced by $\nu$ on $K_2(T)$.
The relation (\ref{eta}) shows that $\nu^{[2]}$ admits one fixed surface $\Sigma$ which induces a surface of singularities in the quotient $K_2(T)/\nu^{[2]}$.
This surface is isomorphic to the K3 surface obtained after resolving the singularities of $T/\nu$.
Because of (\ref{eta}), the fixed points of $\nu$ are in $T[3]$, there are 9 points of the form $(a,2a)$, where $a\in E[3]$. Let $x_1,...,x_9$ be these 9 fixed points. It induces $\frac{9\times8\times1}{6}=12$ fixed points of $\nu^{[2]}$ of the form 
$\left\{x_i,x_j,-x_i-x_j\right\}$, with $x_i\neq x_j$. 
Let $$Z_\tau:=\left\{\left.\xi\in K_2(T)\right|\ \Supp \xi=\left\{\tau\right\}\right\},$$ for all $\tau=(a,2a)$, with $a\in E[3]$.
As explained in \cite[Section 4]{Hassett}:
$$Z_\tau\simeq \mathbb{P}(1,1,3).$$
Hence,
the action of $\nu^{[2]}$ on $Z_\tau$ fixes 2 lines which intersect in the vertex of $Z_\tau$. These two lines are necessarily included in the surface $\Sigma$. Hence there is no additional isolated fixed point in $Z_\tau$; that is $\nu^{[2]}$ has 12 isolated fixed points.

Let $\overline{\Sigma}$ be the image of $\Sigma$ in $K_2(T)/\nu^{[2]}$. Let $x\in \overline{\Sigma}$. The singularity $(K_2(T)/\nu^{[2]},x)$ is analytically equivalent to $(\C^2\times (\C^2/g_3),0)$, with 
$g_3=\diag(\xi_3,\xi_3^{-1})$ and $\xi_3=e^{\frac{2i\pi}{3}}$. The singularity $(\C^2/g_3,0)$ is of type $A_2$ and can be resolved crepently by a blow-up and this resolution has two exceptional lines which intersect in one point. This shows that the singular surface $\overline{\Sigma}$ can be resolved crepently and the exceptional locus is the union of two irreducible divisors. Let $K_3'\rightarrow K_2(T)/\nu^{[2]}$ be such a resolution. The orbifold $K_3'$ is primitively symplectic with 12 isolated cyclic quotient singularities of order 3. Moreover by (\ref{b27}), we have:
$$b_2(K'_3)=4+1+2=7.$$

Furthermore, because of the action of $\nu$ on $\Lambda$ and \cite[Corollary 6.3]{Lol4}, the third Betti number of $K_2(T)/\nu^{[2]}$ is trivial. Since $\Sigma$ is a simply connected surface, $b_3(K_3')=0$. 
\subsection{$b_2=8$}\label{8}
This example has already been introduced in Section \ref{6} as an intermediate consequence of computation. In full generality, it can be constructed as follows.
We consider a symplectic involution $\iota$ on $X$ a manifold of $\operatorname{Kum}_2$-type. As it is explained in \cite[Theorem 7.5]{Lol4}, $X/\iota$ has a surface of singularities and 36 isolated fixed points. By resolving the surface of singularities, we obtain a primitively symplectic orbifold that we denote $K'$ and which has $b_2=8$.
Moreover, its Beauville--Bogomolov form has been computed in \cite[Theorem 1.1]{Lol4} and its Betti numbers in \cite[Proposition 8.23]{Lol4}. It has been proved that $K'$ is irreducible symplectic in \cite[Proposition 3.8]{Lol}.
\subsection{$b_2=9$}
 We describe Fujiki's example with second Betti number 9.
 Let $T$ be a complex torus which admits a symplectic binary dihedral 
 linear automorphism group $\tilde{D}_3$ of order 12.
 For instance, we consider $T=E_{\xi_6}\times E_{\xi_6}$, where $E_{\xi_6}=\C/\left<1,\xi_6\right>$ and $\xi_6=e^{\frac{i\pi}{3}}$.
 Then $\tilde{D}_3$ is generated by the linear automorphisms:
 $$\begin{pmatrix}\xi_6 & 0\\ 0 & \xi_6^{-1}\end{pmatrix}\ \text{and}\ \begin{pmatrix} 0 & 1\\ -1 & 0\end{pmatrix}.$$
 Let $N$ be the center of $\tilde{D}_3$ which is generated by $-\id$. We consider the K3 surface $S$ obtained as a resolution of $T/N$. 
 The group $H=\tilde{D}_3/N$ is isomorphic to the dihedral group of order 6, denoted by $D_3$. There is a natural lifting of a symplectic action of $H$ on $S$. 
 Then, as in the abelian case, we form the automorphisms group $G$ on $S\times S$ generated by $H$ acting diagonally and the involution $(s,t)\mapsto (t,s)$ with $\theta=\id$. 
 Fujiki resolves the singularities in codimension 2 of $(S\times S)/G$ and shows in \cite[Section 13]{Fujiki} that we obtain a primitively symplectic orbifold $Y_{K3}(D_3)$ with second Betti number 9.
 
 Since $\theta=\id$, the resolution in codimension 2 considered by Fujiki in \cite[Section 7]{Fujiki} corresponds to 
 $S^{[2]}/H\rightarrow (S\times S)/G$,
 where 
 $S^{[2]}$ is the Hilbert scheme of 2 points on $S$ and $H$ the induced automorphisms group. That is $Y_{K3}(D_3)=S^{[2]}/H$.

To determine the singularities of $Y_{K3}(D_3)$, we start by computing the singularities of $S/H$.
As there is no fixed point of $S$ by the entire group $H$, by classification of the finite subgroups of $\SL(2,\C)$, we know that the singularities of $S/H$ can only be of type $A_1$ or $A_2$.
We denote by $N_2$ (resp. $N_1$) the number of singularities of $S/H$ of type $A_2$ (resp. $A_1$).
Then the integers $N_2$ and $N_1$ can be computed by Riemann--Roch (a direct computation is also possible, but is more technical).
We apply Theorem \ref{thm:RR} to $S/H$ and the V-bundles $\mathcal{O}_{S/H}$, $\Omega_{S/H}^{[1]}$.
We obtain respectively:
\begin{equation}
2=\frac{c_2}{12}+\frac{2N_2}{9}+\frac{N_1}{8},
\label{eqK31}
\end{equation}
and
\begin{equation}
-6=-\frac{5c_2}{6}-\frac{2N_2}{9}-\frac{N_1}{4}.
\label{eqK32}
\end{equation}
Finally, Theorem \ref{cara} provides:
\begin{equation}
 10=c_2+\frac{2N_2}{3}+\frac{N_1}{2}.
 \label{eqK33}
\end{equation}

Combining (\ref{eqK31}), (\ref{eqK32}) and (\ref{eqK33}), we obtain $N_1=0$ and $N_2=7$.
Then, we can deduce the singularities of $S^{[2]}/H$. There are $\frac{7\times 6}{2}=21$ singular points of the form $(a,b)$ with 
$a\neq b\in\Sing S/H$ and $2\times 7=14$ singular points on the diagonal. So $Y_{K3}(D_3)$ has 35 isolated cyclic quotient singularities of order 3.

\subsection{$b_2=10$}
We have examples of Fujiki, for instance for $H=\Z/4\Z$ and $S$ a K3 surface. 
\subsection{$b_2=11$}
Let $X$ be a manifold of $K3^{[2]}$-type endowed with a symplectic automorphism $\sigma$ of order 3. As  explained in \cite[Section 7.3]{MongT}, there are two possibilities: $\sigma$ has 27 isolated fixed points or $\sigma$ has an abelian fixed surface. When $\Fix \sigma=\left\{27 \text{ points}\right\}$,
 we always have $\rk H^2(X,\Z)^\sigma=11$. We denote $M_3:=X/\sigma$ which is a primitively symplectic orbifold with second Betti number 11 and 27 isolated cyclic quotient singularities of order 3. Moreover its Beauville--Bogomolov form has been computed in \cite[Theorem 1.3]{Lol2}.
\subsection{$b_2=14$}
We have examples of Fujiki, for instance for $H=(\Z/2\Z)^2$ and $S$ a K3 surface. 
\subsection{$b_2=16$}
We consider a symplectic involution $\iota$ on $X$ a manifold of $K3^{[2]}$-type. As it is explained in \cite{Mong0}, $X/\iota$ has a surface of singularities and 28 isolated fixed points. If we resolve the surface of singularities we obtain a primitively symplectic orbifold that we denote $M'$ and which has $b_2=16$.
Moreover, its Beauville--Bogomolov form has been computed in \cite[Theorem 2.5]{Lol5} and its fourth Betti number in \cite[Proposition 2.40]{Lol5}. It has been proved that $M'$ is symplectic irreducible in \cite[Proposition 3.8]{Lol}.

\subsection{$b_{2}=23$} The Hilbert square of K3 surface \cite{Beauville}.

\subsection{Summary}\label{summ}
\[
\begin{tabular}{|c|c|c|c|c|c|c|}
\hline
$b_2$ & $X$ & $b_3$ & $b_4$ & singularities& B--B form& irreducible\\
\hline
3 & $M_{11}^{1}$; $M_{11}^{2}$ & 0 & 26& $a_{11}=5$ & $\begin{pmatrix} 2 & -1 & 3 \\ -1 & 8 & -1\\ 3 & -1 & 6\end{pmatrix}$; $\begin{pmatrix} 2 & 1 & 0 \\ 1 & 6 & 0\\ 0 & 0 & 2\end{pmatrix}$& no\\
\hline
4 &? & ?& ?&? &? &? \\
 \hline
 5 & $M_7$ & 0 & 42 & $a_7=9$ & $U\oplus\begin{pmatrix} 4 & -3\\ -3 & 4 \end{pmatrix}\oplus(-14)$  & no\\
 \hline
 6 & $K_4'$ & 0 & 55 & $a_2=45$, $a_4=2$ &? & no\\ 
 \hline
 7 & $K_2(A)$ & 8 & 108 & smooth & $U^3\oplus (-6)$ & yes\\
 \hline
 7 & $K_3'$ & 0 & 92 & $a_3=12$  &? & ?\\ 
 \hline
 8 & $K'$ & 0 & 90 & $a_2=36$ & $U(3)^3\oplus\begin{pmatrix} -5 & -4\\ -4 & -5 \end{pmatrix}$ & yes\\
 \hline
 9 & $Y_{K3}(D_3)$  & 0 & 66 & $a_3=35$ & ? & no\\
 \hline
 10 & $Y_{K3}(\Z/4\Z)$ & 0 & 118 & $a_2=10$, $a_4=6$ & ? & ?\\
 \hline
 11 & $M_3$ & 0 & 102 & $a_3=27$& $U(3)\oplus U^2\oplus A_2^2\oplus(-6)$ & no\\
 \hline
 12-13 &? & ?& ?&? &? &? \\
 \hline
 14 &$Y_{K3}\left((\Z/2\Z)^2\right)$ & 0 & 150 & $a_2=36$ & ? & ?\\
  \hline
15 &? & ?& ?&? &? &? \\
  \hline
 16 & $M'$ & 0 & 178 & $a_2=28$ & $U(2)^3\oplus E_8(-1)\oplus (-2)^2$ & yes\\
 \hline
 17-22 &? & ?& ?&? &? &? \\
  \hline
  23 & $K3^{[2]}$ & 0 & 276 & smooth & $U^3\oplus E_8(-1)^2\oplus (-2)$ & yes\\
   \hline 
\end{tabular} 
 \]
 Here, we denote by $a_k$ the number of isolated cyclic quotient singularities of order $k$.
 
  The singularities of the Fujiki examples are described in \cite[Section 13, table 1]{Fujiki}. We can then compute their fourth Betti numbers using Proposition \ref{RR} and Example \ref{scomputed}. Same thing for all the orbifolds with known singularities and $b_3$.

\appendix
\section{Salamon's relation for orbifolds}\label{App:Salamon}
 In this Appendix, we extend some of Salamon's results \cite{Salamon} to the setting of complex orbifolds. Let us first recall some notation. Given a complex orbifold $X$, we denote by $\Omega_{X}^{[p]}:=(\Omega_{X}^{p})^{\vee\vee}\cong \iota_{*}(\Omega_{X_{\operatorname{reg}}}^{p})$ the sheaf of reflexive  $p$-forms, where $\iota: X_{\operatorname{reg}}\to X$ is the inclusion of the smooth part. 
 The main result of Appendix is the following generalization of \cite[Corollary 3.4]{Salamon}.

\begin{thm}\label{thm:OrbSalamon}
Let $X$ be a compact complex orbifold of dimension $n$. 
Then 
\begin{equation}\label{eqn:SalamonOrbifold}
\int_{X} c_{1}c_{n-1}=\sum_{p=0}^{n} (-1)^{p}(6p^{2}-\frac{1}{2}n(3n+1))\chi^{\orb}_{p},
\end{equation}
where $\chi_{p}^{\orb}:=\int_{X}\ch(\Omega_{X}^{[p]})\cdot \td(X)$.
\end{thm}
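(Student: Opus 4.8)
The plan is to treat this as a purely characteristic-number identity. Since $\chi_p^{\orb}$ is \emph{defined} to be the integral $\int_X\ch(\Omega_X^{[p]})\td(X)$, neither side of \eqref{eqn:SalamonOrbifold} involves the singular correction terms of Theorem \ref{thm:RR}, so the statement reduces to a formal identity among the Chern classes of the tangent V-bundle $T_X$, followed by integration over $X$. The first point to record is that the Chern--Weil classes of V-bundles constructed in Section \ref{reminders} obey the same formal calculus as in the smooth case (Whitney sum formula, multiplicativity of the Chern character, compatibility with exterior powers), because the construction is local on the uniformizing charts and purely invariant-theoretic. As V-bundles one has $\Omega_X^{[p]}=\Lambda^p\Omega_X^{[1]}$ (on each chart $V$ the associated bundle is $\Omega_V^p=\Lambda^p\Omega_V^1$ with its $G$-action), and $\Omega_X^{[1]}$ is dual to $T_X$; hence, writing $x_1,\dots,x_n$ for the Chern roots of $T_X$, the splitting principle gives
\[
\ch(\Omega_X^{[p]})\td(X)=\sigma_p\bigl(e^{-x_1},\dots,e^{-x_n}\bigr)\prod_{i=1}^n\frac{x_i}{1-e^{-x_i}},
\]
exactly the expression valid on a smooth manifold.

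Next I would assemble these into a single generating class. Introducing a formal variable $y$, the Whitney formula yields
\[
\sum_{p=0}^n\ch(\Omega_X^{[p]})\,y^p\cdot\td(X)=\prod_{i=1}^n\bigl(1+y\,e^{-x_i}\bigr)\frac{x_i}{1-e^{-x_i}}=:\Phi(y),
\]
so that $Q(y):=\sum_p\chi_p^{\orb}y^p=\int_X\Phi(y)$. The decisive move, following Salamon, is the substitution $y=-1+s$: each factor collapses to $x_i\bigl(1+\tfrac{s}{e^{x_i}-1}\bigr)$, whence $\Phi(-1+s)=c_n\prod_{i=1}^n\bigl(1+\tfrac{s}{e^{x_i}-1}\bigr)$, which I expand in powers of $s$ using $\tfrac1{e^{x}-1}=\tfrac1x-\tfrac12+\tfrac{x}{12}-\cdots$. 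Extracting the top (complex-degree-$n$) component and integrating identifies the Taylor coefficients of $Q$ at $y=-1$ with explicit characteristic numbers.

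I expect the third step to carry the real content. The coefficient of $s^0$ gives $Q(-1)=\int_Xc_n$; the coefficient of $s^1$ gives $Q'(-1)=-\tfrac n2\int_Xc_n$, equivalently $\sum_p(-1)^p p\,\chi_p^{\orb}=\tfrac n2\sum_p(-1)^p\chi_p^{\orb}$; and the coefficient of $s^2$ is where $c_1c_{n-1}$ enters, through the elementary symmetric-function identity
\[
\sum_{i\neq j}x_i^2\prod_{k\neq i,j}x_k=c_1c_{n-1}-n\,c_n .
\]
Assembling the $s^2$-coefficient yields
\[
\int_Xc_1c_{n-1}=6\,Q''(-1)+\Bigl(n-\tfrac{3n(n-1)}{2}\Bigr)\int_Xc_n,
\]
and substituting $6\,Q''(-1)=\sum_p(-1)^p(6p^2-6p)\chi_p^{\orb}$, then using the $s^1$-relation to remove the term linear in $p$ and the $s^0$-relation $\int_Xc_n=\sum_p(-1)^p\chi_p^{\orb}$, the scalar coefficient of $\sum_p(-1)^p\chi_p^{\orb}$ collapses to $-\tfrac12 n(3n+1)$, which is precisely \eqref{eqn:SalamonOrbifold}.

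The main obstacle is essentially bookkeeping: one must justify that the Laurent expansions in the individual $x_i$ (which separately carry poles) recombine, after multiplication by $c_n$, into genuine polynomial characteristic classes, and then track the several cancellations so that the only surviving degree-$n$ contributions are the stated combinations of $c_1c_{n-1}$ and $c_n$. No new information about the singularities is required at any stage, which is exactly why the orbifold relation has the same shape as Salamon's in the smooth case.
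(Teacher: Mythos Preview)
Your proposal is correct and follows essentially the same route as the paper's proof: both implement Salamon's generating-function argument, assembling $\sum_p \chi_p^{\orb}\,y^p=\int_X\td(X)\prod_i(1+ye^{-x_i})$, recentring at $y=-1$, and reading off the coefficients of $s^0,s^1,s^2$ to extract $\int_X c_n$ and $\int_X c_1c_{n-1}$. The only cosmetic difference is that the paper packages the expansion via Salamon's auxiliary polynomial $K(t)=\prod_i\bigl(x_i+t\,\tfrac{x_i}{1-e^{-x_i}}\bigr)$ and quotes his formulas for $K_1,K_2$, whereas you compute the same Taylor coefficients by hand using $\tfrac{1}{e^x-1}=\tfrac1x-\tfrac12+\tfrac{x}{12}-\cdots$; the substance is identical.
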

\begin{proof}
Using the fact that $\Omega_{X}^{[p]}\cong {\Omega_{X}^{[n-p]}}^{\vee}\otimes \omega_{X}$, we easily see from the definition that 
\begin{equation}\label{eqn:chi-symmetry}
 \chi_{p}^{\orb}=(-1)^{n}\chi_{n-p}^{\orb}.
\end{equation}
We define the orbifold $\chi_{y}$-genus of $X$ to be the following polynomial in $t$:
\begin{equation}
\chi_{\orb}(t):=\sum_{p=0}^{n}\chi_{p}^{\orb}t^{p}=(-1)^{n}\sum_{p=0}^{n}\chi^{\orb}_{n-p}t^{p}.
\end{equation} 
Let $c(T_{X})=\prod_{i=1}^{n}(1+x_{i})$ be the formal factorization of the Chern class of $X$, where $x_{1}, \dots, x_{n}$ are the Chern roots. Following \cite{Salamon}, we define a new polynomial in $t$ with coefficients characteristic classes of $X$:
$$K(t):=\prod_{i=1}^{n}(x_{i}+t\cdot\frac{x_{i}}{1-e^{-x_{i}}}).$$
Denote by $K_{k}\in H^{*}(X, \Q)$ the coefficient of $t^{k}$.
Clearly, $K_{0}=c_{n}$. More generally, as shown in \cite[Proposition 3.2]{Salamon}, $K_{k}$ has the nice property that $K_{k}-c_{n-k}$ is in the ideal generated by $c_{n-k+1}, \dots, c_{n}$ for any $k\geq 1$. For example,
\begin{equation}\label{eqn:K1}
K_{1}=c_{n-1}+\frac{1}{2}nc_{n};
\end{equation} 
\begin{equation}\label{eqn:K2}
K_{2}=c_{n-2}+\frac{1}{2}(n-1)c_{n-1}+\frac{1}{24}(2c_{1}c_{n-1}+n(3n-5)c_{n}).
\end{equation} 
It is easy to relate $K(t)$ and $\chi_{\orb}(t)$:
\begin{eqnarray*}
\int_{X}K(t)&=&\int_{X}\td(X)\cdot \prod_{i=1}^{n}(t+1-e^{-x_{i}})\\
&=& \int_{X}\td(X)\cdot \sum_{p=0}^{n}(t+1)^{p}(-1)^{n-p}\sum_{1\leq j_{1}<\dots<j_{n-p}\leq n}e^{-x_{j_{1}}-\dots -x_{j_{n-p}}}\\
&=&\int_{X}\td(X)\cdot \sum_{p=0}^{n}(t+1)^{p}(-1)^{n-p}\ch(\Omega_{X}^{[n-p]})\\
&=&(-1)^{n}\sum_{p=0}^{n}(-1-t)^{p}\chi^{\orb}_{n-p}\\
&=&\chi_{\orb}(-1-t).
\end{eqnarray*}
Therefore for any $k\geq 0$,
\begin{equation}\label{eqn:Kchi}
 \int_{X}K_{k}=\frac{1}{k!}\int_{X} K^{(k)}(0)=\frac{(-1)^{k}}{k!}\chi_{\orb}^{(k)}(-1).
\end{equation}
Taking $k=0$ in \eqref{eqn:Kchi}, we obtain 
\begin{equation}\label{eqn:k=0}
 \int_{X}c_{n}=\sum_{p=0}^{n}(-1)^{p}\chi_{p}^{\orb}.
\end{equation}
(This is essentially Blache's Gauss--Bonnet Theorem \ref{cara}.)\\
Taking $k=1$ in \eqref{eqn:Kchi}, combined with \eqref{eqn:K1}, we find 
\begin{equation}\label{eqn:k=1}
\frac{n}{2}\int_{X}c_{n}=\sum_{p=0}^{n}(-1)^{p}p\chi_{p}^{\orb}.
\end{equation} 
(This is equivalent to \eqref{eqn:k=0} by taking into account the symmetry \eqref{eqn:chi-symmetry}.)\\
Taking $k=2$ in \eqref{eqn:Kchi}, combined with \eqref{eqn:K2}, it yields that 
\begin{equation}\label{eqn:k=2}
\frac{1}{12}\int_{X} c_{1}c_{n-1}+\frac{n(3n-5)}{24}\int_{X}c_{n}=\sum_{p=0}^{n}\frac{(-1)^{p}}{2}p(p-1)\chi_{p}^{\orb}.
\end{equation}
We deduce \eqref{eqn:SalamonOrbifold} by combining \eqref{eqn:k=0}, \eqref{eqn:k=1}, and \eqref{eqn:k=2}.
\end{proof}

\begin{rmk}
We are mainly interested in the Hodge numbers $h^{p,q}:=\dim H^{q}(X, \Omega_{X}^{[p]})$ and their alternating sum $$\chi_{p}:=\chi(X, \Omega_{X}^{[p]}):=\sum_{q=0}^{n}(-1)^{q}h^{p,q},$$ rather than $\chi_{p}^{\orb}$. Therefore, in practice, Theorem \ref{thm:OrbSalamon} is often combined with orbifold Riemann--Roch formula relating these two quantities: when $X$ has only isolated singularities, Blache's Riemann--Roch theorem \ref{thm:RR} implies that for any integer $p\geq 0$,
  $\chi^{\orb}_{p}=\chi_{p}-S_{p}$ with 
\begin{equation}\label{eqn:SpIso}
S_p:=\sum_{x\in\Sing X} \frac{1}{|G_x|}\sum_{g\neq\id}\frac{\tr(\rho_{x,\Omega_X^{[p]}}(g))}{\det(\id-\rho_{x, T_{X}}(g))}
\end{equation}
being the correction term determined by the singularities of $X$,
where for any singular point $x\in X$, $G_{x}$ is the local fundamental group and for any locally V-free bundle $F$, $\rho_{x, F}$ is the associated representation of $G_{x}$; see Section \ref{reminders}.
In the general case where the singularities are not necessarily isolated, we can use Kawasaki's Riemann--Roch formula \cite{Kawasaki}, by replacing in the above definition of $S_{p}$ by 
\begin{equation}\label{eqn:SpGeneral}
 S_{p}:=\sum_{\mu} \frac{1}{m_{\mu}}\int_{X_{\mu}} \ch\left(\frac{\tr\Omega_{X}^{[p]}}{\tr\wedge^{\bullet} N^{\vee}_{\mu}}\right)\cdot \td(T_{X_{\mu}}),
\end{equation}
where $\mu$ runs over all connected components of the inertia orbifold $IX$ except the component $X$. We refer to \cite{Kawasaki, Tonita} for the explanation of the notation as well as more details.
\end{rmk}

\begin{rmk}\label{rmk:SpSymmetry}
Given an integer $p$, from \eqref{eqn:chi-symmetry} and the fact $\chi_{p}=(-1)^{n}\chi_{n-p}$, we see that $$S_{p}=(-1)^{n}S_{n-p}.$$ In fact, if $X$ has only isolated singularities, it is a simple exercise in linear algebra to see more precisely that for any $x\in \Sing X$, $$S_{p, x}=(-1)^{n}S_{n-p, x},$$
where $$S_{p, x}:=\frac{1}{|G_x|}\sum_{g\neq\id}\frac{\tr(\rho_{x,\Omega_X^{[p]}}(g))}{\det(\id-\rho_{x, T_{X}}(g))}.$$
\end{rmk}

When specializing Theorem \ref{thm:OrbSalamon} to the symplectic case, we get the following orbifold analogue of \cite[Theorem 4.1]{Salamon}.

\begin{thm}\label{thm:SalamonHK}
Let $X$ be a compact K\"ahler orbifold of even complex dimension $n=2m$ such that the Hodge numbers satisfy the ``mirror symmetry'' $h^{p, q}=h^{n-p, q}$ for any $p, q\geq 0$. Then 
\begin{equation}\label{eqn:SalamonMS}
\int_{X} c_{1}c_{n-1}=\sum_{k=0}^{2n}(-1)^{k}(3k^{2}-\frac{1}{2}n(6n+1))b_{k}-\sum_{p=0}^{n}(-1)^{p}(6p^{2}-\frac{1}{2}n(3n+1))S_{p},
\end{equation}
where $S_{p}$ is defined in \eqref{eqn:SpIso} when $X$ has only isolated singularities and in \eqref{eqn:SpGeneral} in general.
\end{thm}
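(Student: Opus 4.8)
The plan is to deduce Theorem \ref{thm:SalamonHK} from the orbifold Salamon relation already established in Theorem \ref{thm:OrbSalamon}, combined with the Riemann--Roch comparison $\chi_p^{\orb}=\chi_p-S_p$ recorded in the preceding remark, reducing the statement to a purely formal identity among the orbifold Hodge numbers. Substituting $\chi_p^{\orb}=\chi_p-S_p$ into \eqref{eqn:SalamonOrbifold} immediately reproduces the term $-\sum_{p}(-1)^{p}(6p^{2}-\tfrac12 n(3n+1))S_{p}$ of \eqref{eqn:SalamonMS} (with the correct sign), so it remains only to prove the Hodge-theoretic identity
$$\sum_{p=0}^{n}(-1)^{p}\Bigl(6p^{2}-\tfrac12 n(3n+1)\Bigr)\chi_{p}=\sum_{k=0}^{2n}(-1)^{k}\Bigl(3k^{2}-\tfrac12 n(6n+1)\Bigr)b_{k}.$$
This is exactly Salamon's computation \cite[Theorem 4.1]{Salamon}, and the only genuinely new ingredient is the correction $S_p$; the combinatorial part transports verbatim to the orbifold setting because the orbifold Hodge numbers enjoy all the symmetries used in the smooth case.

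First I would expand both sides into double sums using $\chi_{p}=\sum_{q}(-1)^{q}h^{p,q}$ and $b_{k}=\sum_{p+q=k}h^{p,q}$, turning the claim into $\sum_{p,q}(-1)^{p+q}D(p,q)\,h^{p,q}=0$, where a direct computation gives
$$D(p,q)=\Bigl(6p^{2}-\tfrac12 n(3n+1)\Bigr)-\Bigl(3(p+q)^{2}-\tfrac12 n(6n+1)\Bigr)=3(p^{2}-q^{2})-6pq+\tfrac{3n^{2}}{2}.$$
I would then dispose of each piece of $D$ using the symmetries valid for a compact K\"ahler orbifold: the Hodge symmetry $h^{p,q}=h^{q,p}$, the hypothesised mirror symmetry $h^{p,q}=h^{n-p,q}$, and their combination $h^{p,q}=h^{q,p}=h^{n-q,p}=h^{p,n-q}$, all used together with the hypothesis that $n$ is even.

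The term $3(p^{2}-q^{2})$ contributes zero, since its summand is antisymmetric under $p\leftrightarrow q$ whereas $(-1)^{p+q}$ and $h^{p,q}$ are symmetric. For the remaining part, set $S:=\sum_{p,q}(-1)^{p+q}pq\,h^{p,q}$ and $\chi_{\mathrm{top}}:=\sum_{p,q}(-1)^{p+q}h^{p,q}$; then $-6pq+\tfrac{3n^{2}}{2}$ contributes $-6S+\tfrac{3n^{2}}{2}\chi_{\mathrm{top}}$, which I would show vanishes by proving $S=\tfrac{n^{2}}{4}\chi_{\mathrm{top}}$. Reindexing $p\mapsto n-p$ via mirror symmetry (using $n$ even so that $(-1)^{(n-p)+q}=(-1)^{p+q}$) and averaging gives $2S=n\sum_{p,q}(-1)^{p+q}q\,h^{p,q}$; applying the analogous trick $q\mapsto n-q$ through $h^{p,q}=h^{p,n-q}$ to the inner sum gives $\sum_{p,q}(-1)^{p+q}q\,h^{p,q}=\tfrac{n}{2}\chi_{\mathrm{top}}$, whence $S=\tfrac{n^{2}}{4}\chi_{\mathrm{top}}$ and $-6S+\tfrac{3n^{2}}{2}\chi_{\mathrm{top}}=0$.

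I expect the main, though still routine, obstacle to be the careful bookkeeping of these reindexing steps: in particular tracking parities, where the evenness of $n$ is invoked repeatedly to identify $(-1)^{(n-p)+q}$ with $(-1)^{p+q}$, and checking that the hypothesised mirror symmetry together with Hodge symmetry genuinely supplies the second-index symmetry $h^{p,q}=h^{p,n-q}$ on which the averaging argument relies. No new geometric input beyond Theorem \ref{thm:OrbSalamon} and the comparison $\chi^{\orb}_{p}=\chi_p-S_p$ is needed.
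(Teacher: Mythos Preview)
Your proof is correct and uses the same ingredients as the paper---Theorem~\ref{thm:OrbSalamon}, the comparison $\chi_p^{\orb}=\chi_p-S_p$, and the Hodge/mirror symmetries---but organizes them more cleanly. You substitute $\chi_p^{\orb}=\chi_p-S_p$ into \eqref{eqn:SalamonOrbifold} at the outset, so the $S_p$-term appears immediately with the final coefficient $6p^2-\tfrac12 n(3n+1)$, and the remainder is the purely combinatorial Hodge-number identity you verify. The paper instead carries $\int_X c_n$ and the $S_p$ through separate intermediate relations (its equations \eqref{eqn:phi20}--\eqref{eqn:phi11} and \eqref{eqn:cnBetti}), arriving first at an $S_p$-coefficient $6p^2+3np-3n^2-\tfrac{n}{2}$ and then invoking the extra symmetry $S_p=S_{n-p}$ (Remark~\ref{rmk:SpSymmetry}) to reduce it to the stated form. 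Your route is shorter and sidesteps that final step; the paper's route has the minor virtue of making the intermediate relation \eqref{eqn:Betti} (with $\int_X c_n$ still present) explicit.
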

The most important case we have in mind is when $X$ is a primitively symplectic orbifold (Definition \ref{def:PSO}), hence the left-hand side of \eqref{eqn:SalamonMS} vanishes and we get a linear relation among the Betti numbers and contributions of singularities.
\begin{proof}
We keep the same notation as above. Since $\chi_{p}^{\orb}=\chi_{p}-S_{p}$, the equations \eqref{eqn:SalamonOrbifold} and \eqref{eqn:k=0} imply that 
\begin{equation}\label{eqn:phi20}
\sum_{p, q=0}^{n}(-1)^{p+q}p^{2}h^{p,q}-\sum_{p}(-1)^{p}p^{2}S_{p}=\sum_{p=0}^{n} (-1)^{p}p^{2} \chi_{p}^{\orb}=\frac{1}{6}\int_{X} c_{1}c_{n-1}+\frac{1}{12}n(3n+1)\int_{X} c_{n}.
\end{equation}
Using the Hodge symmetry $h^{p,q}=h^{q,p}$, we have
\begin{equation}\label{eqn:phi02}
\sum_{p, q=0}^{n}(-1)^{p+q}q^{2}h^{p,q}-\sum_{p}(-1)^{p}p^{2}S_{p}=\frac{1}{6}\int_{X} c_{1}c_{n-1}+\frac{1}{12}n(3n+1)\int_{X} c_{n}.
\end{equation}
The mirror symmetry relation $h^{p,q}=h^{p, n-q}$ implies that (remember that $n$ is even)
\begin{equation*}
 \sum_{p, q=0}^{n}(-1)^{p+q}pqh^{p,q}=\sum_{p, q=0}^{n}(-1)^{p+q}p(n-q)h^{p,q}.
\end{equation*}
Hence 
\begin{equation*}
 2\sum_{p, q=0}^{n}(-1)^{p+q}pqh^{p,q}=\sum_{p, q=0}^{n}(-1)^{p+q}nph^{p,q}=n\sum_{p=0}^{n}(-1)^{p}p\chi_{p}.
\end{equation*}
Writing $\chi_{p}=\chi_{p}^{\orb}+S_{p}$, together with \eqref{eqn:k=1}, we obtain
\begin{equation}\label{eqn:phi11}
2\sum_{p, q=0}^{n}(-1)^{p+q}pqh^{p,q}=n\sum_{p=0}^{n}(-1)^{p}p\chi_{p}^{\orb}+n\sum_{p=0}^{n}(-1)^{p}pS_{p}=\frac{n^{2}}{2}\int_{X}c_{n}+n\sum_{p=0}^{n}(-1)^{p}pS_{p}.
\end{equation}
Combining \eqref{eqn:phi20}, \eqref{eqn:phi02}, and \eqref{eqn:phi11}, we find
$$\sum_{p, q=0}^{n}(-1)^{p+q}(p+q)^{2}h^{p,q}-\sum_{p=0}^{n}(-1)^{p}p(2p+n)S_{p}=\frac{1}{3}\int_{X}c_{1}c_{n-1}+(n^{2}+\frac{1}{6}n)\int_{X}c_{n},$$
which is nothing else but
\begin{equation}\label{eqn:Betti}
\sum_{k=0}^{2n}(-1)^{k}k^{2}b_{k}-\sum_{p=0}^{n}(-1)^{p}p(2p+n)S_{p}=\frac{1}{3}\int_{X}c_{1}c_{n-1}+(n^{2}+\frac{1}{6}n)\int_{X}c_{n},
\end{equation}
On the other hand, \eqref{eqn:k=0} says that
\begin{equation}\label{eqn:cnBetti}
 \int_{X}c_{n}=\sum_{k=0}^{2n}(-1)^{k}b_{k}-\sum_{p=0}^{n}(-1)^{p}S_{p}.
\end{equation}
Putting \eqref{eqn:Betti} and \eqref{eqn:cnBetti} together, we obtain the following formula,
\begin{equation}
\int_{X} c_{1}c_{n-1}=\sum_{k=0}^{2n}(-1)^{k}(3k^{2}-\frac{1}{2}n(6n+1))b_{k}-\sum_{p=0}^{n}(-1)^{p}(6p^{2}+3np-3n^{2}-\frac{1}{2}n)S_{p}.
\end{equation}
 The desired formula then can be deduced using the fact that $S_{p}=S_{n-p}$, see Remark \ref{rmk:SpSymmetry}.
\end{proof}

We recover Proposition \ref{RR} as a special case of Theorem \ref{thm:SalamonHK}.

\bibliographystyle{amssort}

\noindent
Lie \textsc{Fu}

\noindent
Institut Camille Jordan, Universit\'e Claude Bernard Lyon 1, Lyon (France)

\noindent
IMAPP, Radboud Universiteit, Nijmegen (Netherlands)

\noindent
{\tt fu@math.univ-lyon1.fr}\\

\noindent
Gr\'egoire \textsc{Menet}

\noindent
Institut Fourier

\noindent 
100 rue des Math\'ematiques, Gi\`eres (France),

\noindent
{\tt gregoire.menet@univ-grenoble-alpes.fr}

\end{document}